\documentclass[11pt]{article}
\usepackage[english]{babel}
\usepackage[utf8]{inputenc}
\usepackage{amsmath,amsfonts,amssymb,amsthm}
\usepackage[letterpaper,top=2cm,bottom=2cm,left=3cm,right=3cm,marginparwidth=1.75cm]{geometry}
\usepackage{graphicx}
\usepackage[colorlinks=true, allcolors=blue]{hyperref}
\usepackage{enumitem}
\usepackage{bm}
\usepackage{cleveref}
\usepackage[numbers,sort&compress]{natbib}
\usepackage[table]{xcolor}
\usepackage{longtable,diagbox}
\usepackage{xcolor}
\newtheorem{theorem}{Theorem}[section]

\newtheorem{lemma}[theorem]{Lemma}
\newtheorem{proposition}[theorem]{Proposition}

\theoremstyle{definition}
\newtheorem{definition}[theorem]{Definition}

\newtheorem{assumption}[theorem]{Assumption}

\usepackage{authblk}
\usepackage{mathrsfs}
\usepackage[dvipsnames]{xcolor}
\usepackage{cancel}
\usepackage{tikz}
\usetikzlibrary{shapes, positioning, calc}
\usetikzlibrary{positioning, arrows.meta}
\usepackage{subcaption}
\usepackage{lineno}
\newtheorem*{variance1}{Principle of Optimal Variance Reduction (V1)}
\newtheorem*{variance2}{Principle of Optimal Variance Reduction (V2)}

\DeclareMathOperator*{\argmin}{arg\,min}
\providecommand{\keywords}[1]
{
  \small	
  \textbf{\textit{Keywords---}} #1
}

\title{Data-Adaptive Learning of Dynamical Systems by Matching Transfer Operators and Invariant Measures}
\author[1]{Yinong Huang}
\author[2]{Jonah Botvinick-Greenhouse}
\author[3]{Yunan Yang}
\affil[1]{Department of Mathematics, North Carolina State University, Raleigh, NC}
\affil[2]{Center for Applied Mathematics, Cornell University, Ithaca, NY}
\affil[3]{Department of Mathematics, Cornell University, Ithaca, NY}

\date{}
\begin{document}

\maketitle 

\begin{abstract}

Trajectory-based learning of dynamical systems is often fragile in the presence of noise, chaos, or sparse observations, as small pointwise errors can rapidly amplify. We introduce a transition-statistics approach to system identification that learns dynamics by matching the induced motion of probability mass across a data-adaptive mesh. Given trajectory data, we build an unstructured partition of state space and approximate the Perron--Frobenius operator with a regularized Ulam transition matrix. We replace hard cell indicators with continuous, piecewise-smooth partition-of-unity weights, yielding a Markov matrix supporting gradient-based optimization with respect to the parameters of a learned vector field. This enables two related training objectives: matching invariant measures through the stationary eigenvectors of the transition matrices, and matching the full transition matrices to capture transport between regions of state space. Numerical experiments on Lorenz-63, Lorenz-96, and a reduced-order NOAA sea surface temperature forecasting problem show that transition-statistics matching gives more reliable long-time dynamics than pointwise trajectory matching, particularly under measurement noise and sparse sampling. The approach provides a robust operator-theoretic alternative to trajectory-level losses for learning chaotic and partially observed dynamical systems.
\end{abstract}

\keywords{dynamical system identification; transfer operators; Markov matrix matching; invariant measure matching; optimal transport}

\section{Introduction}

Data-driven dynamical system identification seeks to infer an evolution law from observed trajectories. This task arises in weather and climate prediction, biological modeling, engineering systems, and reduced-order modeling of complex physical processes. A common approach is to fit a vector field or flow map by comparing simulated trajectories with observed ones. For example, sparse regression methods such as SINDy estimate candidate terms in the governing equations from trajectory data \cite{brunton2016discovering}, while neural differential equation and shooting-based methods parameterize the vector field and optimize a pointwise trajectory loss \cite{DBLP:journals/corr/abs-1806-07366, linot2023stabilized}.

Such trajectory-based, or Lagrangian, objectives can be effective when the data are accurate and sufficiently well sampled. However, they become fragile under noise, sparsity, and chaos which are the regimes considered in this paper. Measurement noise corrupts derivative estimates and can cause a learned model to overfit local fluctuations. Slow sampling makes the one-step map harder to infer and can introduce spurious minima in the optimization landscape. Chaotic dynamics introduce an additional difficulty: even when a model has the correct qualitative behavior, small errors in the vector field or initial condition may lead to rapid pointwise separation of trajectories. Thus, long-time pointwise trajectory agreement is often too stringent a criterion for learning chaotic systems. 

An alternative is to compare statistical quantities that are stable under the long-time dynamics. Invariant measures provide one such description: rather than recording the precise location of a trajectory at each time, they describe the distribution of states visited asymptotically by the system. For chaotic systems, this perspective is natural because nearby trajectories may separate while still sampling the same invariant measure. This observation has motivated a class of Eulerian system identification frameworks in which the learned dynamics are trained to reproduce global statistical information extracted from observed data~\cite{jiang2023training, schiff2024dyslim, botvinick2025invariant2,giorgini2026conditional,melo2026learning,greve2019data}. This motivates learning objectives that compare the induced transport of probability mass over finite time, rather than comparing individual trajectories pointwise.

In this work, we focus on a finite-dimensional operator-theoretic representation of these statistics. For a fixed lag time, the Perron--Frobenius operator can be approximated on a partition of state space by a Markov transition matrix whose entries describe the probabilities of moving between cells over that lag. The invariant measure is then represented by a dominant eigenvector of this matrix. Matching invariant measures therefore uses only part of the information contained in the transition matrix. In contrast, matching the full Markov matrix compares the transition probabilities themselves and can capture how mass moves through the state space, not just where it accumulates in the long-time limit. %
While invariant-measure matching has recently been used for system identification, directly matching finite-dimensional Markov transition matrices appears to have received much less attention as a training objective for learning dynamical systems.

The main computational challenge in matching either the invariant measure or transition statistics is that Markov matrix approximations are typically constructed from a partition of the state space. Uniform grids are simple in low dimensions, but their cost grows rapidly with the ambient dimension and they allocate many cells to regions never visited by the data. This is especially wasteful for dissipative or chaotic systems whose trajectories concentrate near an attractor of much smaller intrinsic dimension than the surrounding state space. Motivated by this phenomenon, several works in the literature have focused on data-adaptive meshes for approximating the PFO in forward inference tasks related to the approximation of invariant measures and computation of transition times \cite{dellnitz1998adaptive,murray2004optimal,wehlitz2026data,bittracher2018data,bonner2023improving}. 

However, in the context of dynamical system learning, where one aligns model-simulated and observed invariant measures, uniform discretizations are typical and data-adaptive meshes have received little attention \cite{greve2019data,yang2023optimal,botvinick2023learning}. In this work, we  construct a data-adaptive mesh using a Voronoi partition based on $k$-means clustering of the observed trajectory. This mesh serves as the foundation for constructing observed and model-simulated transition matrices, thereby enabling mesh-based Eulerian approaches to system identification to scale to high-dimensional applications. We also analyze the error of the empirical transition matrix and derive a variance-reduction principle which suggests that sample counts should be evenly distributed across mesh cells, thereby motivating our data-adaptive clustering.

The second computational challenge is differentiability. Classical Ulam matrices use hard cell indicators, so small changes in the learned vector field often do not change the assigned cell of a predicted point. This produces vanishing or unstable gradients, making the transition matrix difficult to use inside a neural-network training loop. We therefore use a regularized Ulam construction in which hard indicators are replaced by continuous nonnegative weights forming a partition of unity. The resulting Markov matrix remains stochastic, but its entries vary continuously with the predicted next states and hence with the parameters of the learned vector field. We prove convergence of the regularized Markov matrix in an iterated limit where the regularization parameter tends to zero, before the mesh is refined (Theorem \ref{thm:convergence}). 

The present paper develops and tests this differentiable Markov matrix framework for learning dynamics from data. The regularized data-adaptive  approximation builds on an earlier technical note by one of the authors \cite{botvinickgreenhouse2025invariant}, but the emphasis here is different: we use this approximation as the basis for direct Markov matrix matching in system identification applications, compare it systematically with pointwise trajectory matching, and evaluate its behavior under noisy and sparsely sampled data. In addition to synthetic Lorenz-63 and Lorenz-96 experiments, we apply the method to a reduced-order forecasting problem for NOAA sea surface temperature data. The main contributions of this paper are as follows.

\begin{itemize}
    \item \textbf{Differentiable transition-statistics objectives:}
    We formulate invariant-measure and Markov-matrix matching objectives for system identification using regularized Ulam approximations on data-adaptive meshes. The Markov-matrix objective compares finite-time transition probabilities between mesh cells, retaining information that is lost when only stationary distributions are matched.

    \item \textbf{Regularized Ulam approximation and convergence:}
    We replace hard cell indicators by continuous partition-of-unity weights, producing stochastic transition matrices that are differentiable with respect to the parameters of a learned vector field. We prove convergence of the regularized approximation in an iterated limit as the regularization is removed and the mesh is refined.

    \item \textbf{Data-adaptive mesh construction and sampling error:}
    We analyze the approximation error in empirical transition-matrix estimation and derive a variance-reduction principle for partition design. This motivates data-adaptive meshes that allocate resolution to regions visited by the observed dynamics.

    \item \textbf{Numerical evaluation under noise, sparsity, and chaos:}
    We compare trajectory matching, invariant-measure matching, and Markov-matrix matching on Lorenz-63, Lorenz-96, and a reduced-order NOAA sea surface temperature forecasting problem. The experiments show that transition-statistics objectives give more stable long-time reconstructions under noisy and sparsely sampled data.
\end{itemize}

The paper is organized as follows. \Cref{sec:background} reviews the PFO, invariant measures, and finite-dimensional Markov matrix approximations. \Cref{sec:mesh} describes the data-adaptive regularized Ulam construction used to build differentiable transition matrices. Section \ref{subsec:galerkin} presents theoretical results concerning the convergence of our regularized approximation and error analysis of the finite-sample approximation.  Numerical experiments are presented in \Cref{sec:numerics}, including mesh comparisons, invariant measure matching, Markov matrix matching, and the NOAA sea surface temperature example. Conclusions follow in \Cref{sec:conclusion}.

\section{Background}\label{sec:background}

In this section, we review the background needed for our data-adaptive approximation of the PFO and for the two system identification objectives used in this paper. We begin by relating continuous-time dynamics to discrete-time maps, then define the PFO, invariant measures, and their finite-dimensional Markov matrix approximations.

\subsection{Dynamical Systems and Time-\texorpdfstring{$\Delta t$}{Delta t} Maps}

Let $X$ denote a state space with smooth structure, such as a Riemannian manifold or $\mathbb{R}^d$. We consider continuous-time dynamical systems of the form  $\dot{x} = v(x),$ where $v$ is the vector field on $X$ that we seek to learn from observed data. For an initial condition $x\in X$, we denote by $f_t(x)$ the solution of $\dot{x} = v(x)$ at time $t$. Thus, $f_t:X\to X$ is the time-$t$ flow map generated by $v$. 

We assume that our data are observed at uniformly spaced discrete times, and thus we work directly with the time-$\Delta t$ flow map $T:=f_{\Delta t}:X\to X$. The observed trajectory, which is used for system identification, can then be written as $\{x_{k}\}_{k=0}^{N}$ where $x_{k+1}=T(x_k)$ for $k = 0,1,\dots, N-1$, up to measurement noise. Although the underlying dynamics are continuous in time, the loss functions defined in this work are formulated in terms of the discrete map $T$. This viewpoint is convenient because the PFO and its Markov matrix approximations describe how probability distributions evolve under a single application of the map $T$. In the subsequent discussion, we shift our focus to an arbitrary discrete map $T:X\to X$ and allow $X$ to be a general measurable space.

\subsection{The Perron--Frobenius Operator (PFO)}\label{subsec:perron}

Let $(X,\mathscr{B},\mu)$ be a measure space and let $T:X\to X$ be a measurable map. The PFO, also called the transfer operator, describes the evolution of densities under the dynamics. While the state-space map $T$ may be nonlinear, the PFO is linear on densities. We assume that $T$ is non-singular with respect to $\mu$, meaning that
\[
    \mu(B)=0
    \quad \Longrightarrow \quad
    \mu(T^{-1}(B))=0,
\]
for all $B\in\mathscr{B}$. The PFO is then defined as follows.

\begin{definition}[Perron--Frobenius operator~\cite{lasota2013chaos}]\label{def:PFO}
Let $T:X\to X$ be non-singular with respect to $\mu$. The Perron--Frobenius operator $\mathcal{T}:L^1_\mu(X)\to L^1_\mu(X)$
is the unique linear operator satisfying
\begin{equation}\label{eq:PFO_definition}
    \int_B \mathcal{T}f\,d\mu
    =
    \int_{T^{-1}(B)} f\,d\mu,
    \qquad
    \forall B\in\mathscr{B},
    \qquad
    \forall f\in L^1_\mu(X).
\end{equation}
\end{definition}

The operator $\mathcal{T}$ is a Markov operator, meaning that it preserves nonnegativity and total mass. In particular, if $f\geq 0$ and $\int_X f\,d\mu=1$, then $\mathcal{T}f\geq 0$ and
\[
    \int_X \mathcal{T}f\,d\mu = 1.
\]
Thus, $\mathcal{T}$ maps probability densities to probability densities. This property is central to our finite-dimensional approximation which preserves the Markov structure of the continuous operator.

\subsection{Invariant Measures}

An invariant measure describes the long-time statistical behavior of a dynamical system. Let $\mathscr{P}(X)$ denote the set of Borel probability measures on $X$. If $\nu\in\mathscr{P}(X)$ and $T:X\to X$ is measurable, the pushforward measure $T_\#\nu$ is defined by
\[
    (T_\#\nu)(B) = \nu(T^{-1}(B)),
    \qquad \forall B\in\mathscr{B}.
\]

\begin{definition}[Invariant measure]
 A probability measure $\nu\in\mathscr{P}(X)$ is said to be $T$-invariant if $T_\#\nu = \nu.$
\end{definition}

If $\nu$ is absolutely continuous with respect to the reference measure $\mu$, then $d\nu=\rho\,d\mu$ for some density $\rho\in L^1_\mu(X)$. In this case, invariance of $\nu$ is equivalent to the fixed-point relation
$\mathcal{T}\rho = \rho$, where $\mathcal{T}$ is defined in \eqref{eq:PFO_definition}. Thus, invariant densities are eigenfunctions of the PFO associated with eigenvalue $1$. 

Under suitable ergodicity assumptions, one can estimate the invariant measure, or its coarse-grained cell probabilities, from a single long trajectory. Many computational approaches for approximating invariant measures exist, including histogram estimation \cite{greve2019data, yang2023optimal}, generative models \cite{giorgini2025score}, and Ulam's method \cite{ding1996finite, Sch_tte_2016,froyland2007detection}. In this work, we focus on the latter approach, which estimates the invariant measure as a dominant eigenvector of a Markov transition matrix approximating the continuous PFO. 

\subsection{Ulam's Method}\label{subsec:ulam}

The PFO defined in~\eqref{eq:PFO_definition} is infinite-dimensional, so it must be approximated before it can be used computationally. A standard approach is Ulam's method, which partitions the state space into cells and approximates the transition probabilities between these cells. Let $\{C_i\}_{i=1}^n$ be a partition of $X$. The associated Markov matrix $M\in\mathbb{R}^{n\times n}$ has entries
\begin{equation}\label{eq:ulam_background}
    M_{ij}
    =
    \frac{\mu(C_i\cap T^{-1}(C_j))}{\mu(C_i)}.
\end{equation}
Equivalently, $M_{ij}$ is the conditional probability that a point starting in cell $C_i$ moves to cell $C_j$ after one application of $T$. Thus, each row of $M$ sums to one, and $M$ gives a finite-dimensional approximation of the PFO.  Moreover, the dominant left eigenvector of $M$ approximates the underlying invariant measure over the partition cells. The convergence of $M$ and its dominant eigenvector under partition refinement has been previously studied~\cite{li1976finite,ding1996finite,froyland1996estimating}.

When only trajectory data is available, the entries of $M$ can be estimated empirically using Monte Carlo integration or ergodic averages. In particular, if $\{x_k^i\}_{k=1}^{N_i}$ are the observed samples lying in cell $C_i$, then
\begin{equation}\label{eq:empirical_markov_background}
    \widehat{M}_{ij}
    =
    \frac{1}{N_i}
    \sum_{k=1}^{N_i}
    \chi_{C_j}(T(x_k^i)).
\end{equation}
In the data-driven setting, $T(x_k^i)$ in \eqref{eq:empirical_markov_background} is replaced by the observed next state. For a parameterized model $v_\theta$, it is replaced by the one-step prediction obtained by integrating the learned dynamics over a time $\Delta t$.

\section{Matching Transition Matrices and Invariant Measures}
\label{sec:mesh}

In this section, we provide a detailed description of our framework for learning dynamical systems through the comparison of transition statistics and invariant measures. Section \ref{subsec:mesh} outlines our framework for data-adaptive mesh construction over which we define transition matrices. In Section \ref{subsec:diff_trans}, we discuss a differentiable reformulation of Ulam's method which replaces the hard-cell indicators in \eqref{eq:ulam_background} with continuous partition-of-unity weights, thereby enabling stable gradient-based optimization. Sections~\ref{subsec:matrix_comp} and~\ref{subsec:invariant_comp} then introduce the training objectives we consider for the comparison of transition statistics and invariant measures, respectively.

\subsection{Data-Adaptive Partition}\label{subsec:mesh}

Classical discretizations of the PFO often begin by partitioning the state space into a uniform grid. This is convenient in one or two dimensions, but it becomes inefficient as the number of grid cells grows rapidly with dimension; see Table~\ref{tab:uniform_cost} for the cost of increasing the number of cells in a representative uniform-mesh computation. Many cells may lie in regions that are never visited by the observed trajectory. This issue is especially severe for dissipative or chaotic systems whose long-time dynamics concentrate near an attractor.

\begin{table}[h!]
    \centering
    \begin{tabular}{|c|c|c|}
    \hline
    Number of cells     & Wall-clock time (s) & Memory usage (MB)  \\
    \hline
    $\approx 10^2$ & 0.07 & 0.07 \\
    $\approx 10^3$ & 0.08 & 0.39 \\
    $\approx 10^4$ & 0.12 & 3.33\\
    $\approx 10^5$ & 34.04 &320.23 \\
    \hline
    \end{tabular}
    \caption{Computational cost of approximating the invariant measure of the van der Pol oscillator using uniform meshes and the PDE-based approach in \cite{botvinick2023learning}. The computation is performed using an Intel i7-1165G7 CPU.}
    \label{tab:uniform_cost}
\end{table}

Let $
    \{x_k\}_{k=0}^{N-1}\subseteq \mathbb{R}^d$
denote the observed trajectory samples. We first choose $n$ initial points without replacement from the observed trajectory  uniformly at random. These $n$ points are then used to initialize a $k$-means or MiniBatch $k$-means clustering routine, which results in $n$ cell centers $c_1,\dots,c_n\in\mathbb{R}^d$. Collectively, these centers define a Voronoi partition $\{C_i\}_{i=1}^n$ of $\mathbb{R}^d$ where the $i$-th cell contains  all points  $x\in \mathbb{R}^d$ such that $|x-c_i| < |x-c_j|$ for $j \neq i$, where $|\cdot|$ denotes the Euclidean distance in $\mathbb{R}^d$. To efficiently determine which cell of a Voronoi diagram a given point $x\in \mathbb{R}^d$ belongs in, we use a k-d tree. Visualizations of the Voronoi partition are shown in Figures \ref{fig:weights} and \ref{fig:CatA}.

The resulting partition is data-adaptive: it places cells in regions frequently visited by the trajectory and avoids allocating resolution to regions with little or no data. This construction is particularly useful when the observed dynamics occupy a low-dimensional or highly nonuniform subset of the ambient state space. The quality of the empirical transition matrix depends on how many samples fall in each cell. If a cell contains very few samples, then its outgoing transition probabilities following \eqref{eq:empirical_markov_background} have high variance. In Section \ref{subsec:MC_analysis}, we introduce a variance reduction principle which motivates constructing cells so that the observed samples are distributed evenly across the partition (Proposition \ref{prop:optimal_strat}). While this is approximately obtained using $k$-means clustering, it can be exactly achieved using certain constrained $k$-means clustering routines~\cite{bradley2000constrained}.

\subsection{Differentiable Transition Matrix}\label{subsec:diff_trans}

Throughout, we let $v_\theta$ be a parameterized vector field, where $\theta\in\Theta \subseteq \mathbb{R}^p$, and let $T_{\theta} = f_{\theta,\Delta t}$ denote its time-$\Delta t$ flow map. While the empirical matrix \eqref{eq:empirical_markov_background} is appropriate for estimating transition probabilities from data, the hard indicator functions $\chi_{C_j}$ are not suitable for gradient-based optimization.  In particular, a small change in $\theta$ may not change the cell assignment of a predicted point $T_{\theta}(x)$. Consequently, the entries of the corresponding transition matrix can be locally constant as functions of $\theta$, leading to vanishing gradients.

\begin{figure}[h]
    \centering
    \includegraphics[width=\textwidth]{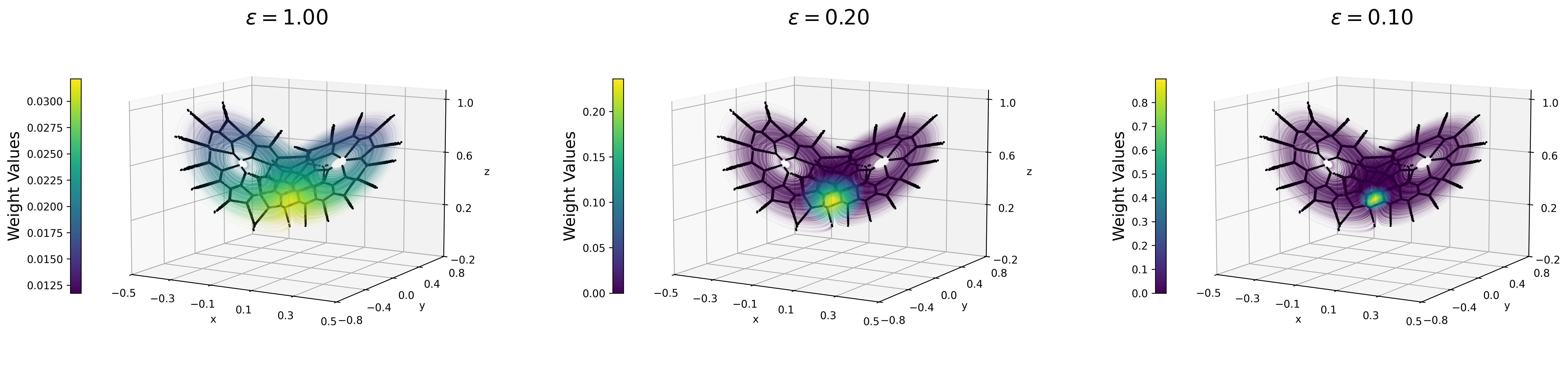}
    \caption{Visualizations of a continuous weighting function $\psi_j^{(\varepsilon)}$ following~\eqref{eq:smooth_weights2} for the Lorenz-63 system \eqref{eq:lorenz63} in a normalized coordinate frame. The black lines are the edges of the Voronoi cells. As $\varepsilon$ decreases, the weighting function approaches the characteristic function.}
    \label{fig:weights}
\end{figure}

\subsubsection{Partition of Unity}
To obtain a differentiable approximation, we replace the characteristic functions $\chi_{C_j}$ by continuous, nonnegative soft-assignment weights
\[
    \psi_j^{(\varepsilon)}:X\to[0,1],
    \qquad j=1,\ldots,n,
\]
forming a partition of unity:
\[
    \sum_{j=1}^n \psi_j^{(\varepsilon)}(x)=1,
    \qquad
    \psi_j^{(\varepsilon)}(x)\geq 0.
\]
The parameter $\varepsilon>0$ controls the localization of the approximation. Smaller values make the weights closer to hard cell indicators, while larger values smooth the transition between neighboring cells. For example, in our numerical experiments we construct the weighting functions as
 \begin{align}
 \psi_j^{(\varepsilon)}(x)
    &=
    \frac{r_j(x;\varepsilon)}
    {\sum_{\ell=1}^n r_\ell(x;\varepsilon)}, \qquad  j = 1,\dots, n, \label{POU}
\end{align}
where each $r_j$ decays as a function of the distance from the cell center $c_j$ following either
\begin{align}
 r_j^{(1)}(x;\varepsilon)
    &=
    \log\!\left(1+\exp\!\left(-\frac{|x-c_j|}{\varepsilon}\right)\right),
    \label{eq:smooth_weights} \\
 r_j^{(2)}(x;\varepsilon)
    &=  \max\left\{0,1-\frac{|x-c_j|}{\varepsilon}\right\}.
    \label{eq:smooth_weights2}
\end{align}
We note that both \eqref{eq:smooth_weights} and \eqref{eq:smooth_weights2} are continuous, piecewise $C^{\infty}$, and that \eqref{eq:smooth_weights} is nonzero everywhere.
To use \eqref{eq:smooth_weights2} in practice, the regularization parameter $\varepsilon > 0$ must be chosen sufficiently large, such that
$\sum_{\ell = 1}^n r_{\ell}(x;\varepsilon) > 0$ for all $x\in \Omega$, where $\Omega \subseteq X$ is the computational domain. For such a choice of $\varepsilon$, both \eqref{eq:smooth_weights} and \eqref{eq:smooth_weights2} can construct  partitions of unity following \eqref{POU}  that mitigate the difficulty of vanishing gradients in Ulam's method. A visualization of the weighting function $\psi_{j}^{(\varepsilon)}$ following \eqref{eq:smooth_weights2} is shown in Figure~\ref{fig:weights}.

\subsubsection{Constructing the Regularized Matrix}
For samples $\{x_k^i\}_{k=1}^{N_i}$ in cell $C_i$, we define the $i$-$j$ entry of the model-induced regularized transition matrix by
\begin{equation}
   \widehat{M}_{ij}^{(\varepsilon)}(v_\theta)
    =
    \frac{1}{N_i}
    \sum_{k=1}^{N_i}
    \psi_j^{(\varepsilon)}
    \bigl(T_{\theta}(x_k^i)\bigr).
    \label{eq:regularized_transition_matrix}
\end{equation}
  Because the weights $\psi_{j}^{(\varepsilon)}$ form a partition of unity, each row of $ \widehat M^{(\varepsilon)}(v_\theta)$ sums to one. Hence the regularized construction preserves the stochastic structure of the Ulam matrix. At the same time, the entries vary continuously with the predicted states and therefore with the parameters $\theta$, allowing the transition matrix to be used for gradient-based optimization.
  
\begin{figure}[h]
    \centering
    \includegraphics[width=\textwidth]{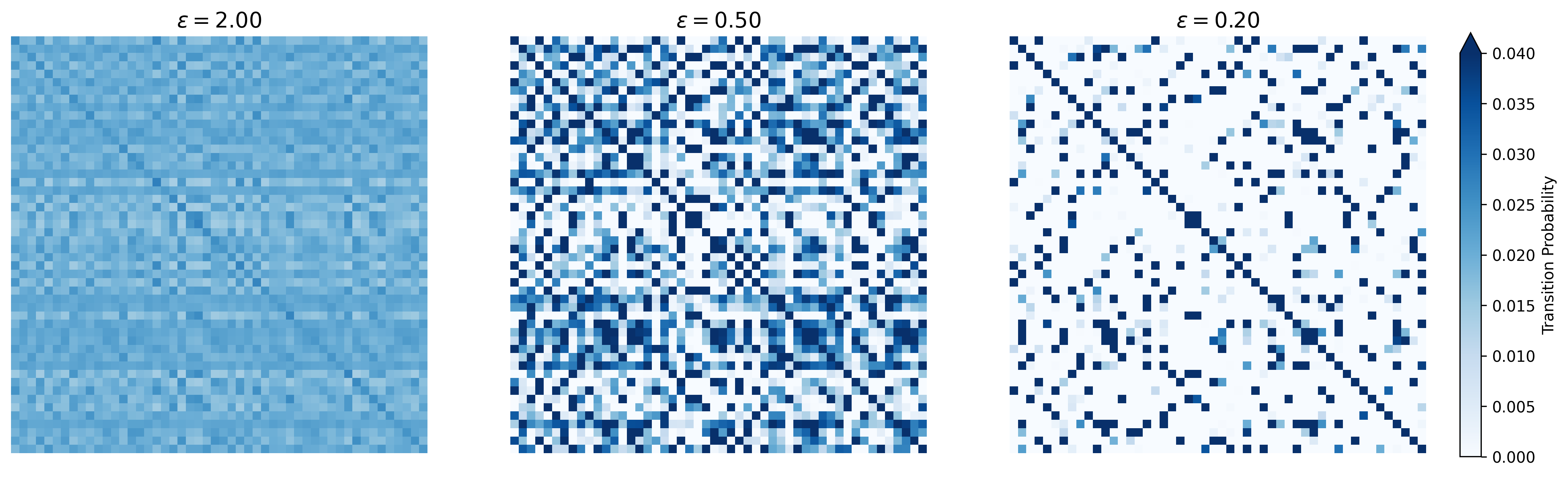}
    \caption{Visualization of the regularized Markov matrices following~\eqref{eq:smooth_weights2} generated with different weighting parameters.}
    \label{fig:weights_mats}
\end{figure}

The same regularized construction can be applied both to the observed data and to the model predictions. From the observed trajectory, we construct a reference matrix $\widehat{M}^{(\varepsilon)}$ by replacing the model prediction $T_{\theta}(x_{k}^i)$ 
in \eqref{eq:regularized_transition_matrix} with the observed next state. Using the same cells and the same regularization $\varepsilon > 0$ in both matrices reduces discretization mismatch between the reference data and the forward model. While we introduced $\varepsilon > 0$ to promote differentiability of \eqref{eq:regularized_transition_matrix}, we also find that larger values of $\varepsilon > 0$ can promote robustness to noise in dynamical system learning tasks. A visualization of the regularized transition matrix is shown in Figure~\ref{fig:weights_mats}.

\subsection{Comparing Markov Transition Matrices}\label{subsec:matrix_comp}

We now introduce our Markov matrix matching objective, in which the model $v_{\theta}$ is trained by comparing the simulated and observed transition matrices. In particular, we attempt to extract the unknown parameters $\theta\in \Theta$ by optimizing the loss 
\begin{equation}\label{eq:background_M_loss}
    \mathcal{J}_{M}(\theta)
    =
    \mathsf{D}\bigl(\widehat{M}^{(\varepsilon)}(v_\theta),M^*\bigr),
\end{equation}
where $\mathsf{D}$ is a discrepancy between stochastic matrices, $\widehat{M}^{(\varepsilon)}(v_\theta)$ is the model-estimated regularized transition matrix following \eqref{eq:regularized_transition_matrix} and $M^*$ is the regularized transition matrix estimated from data. In our numerical experiments, $v_{\theta}$ is parameterized as a neural network and \eqref{eq:background_M_loss} is minimized using gradient-based methods. 

The metric $\mathsf{D}$ on stochastic matrices may be chosen as the Frobenius norm. Since both $\widehat{M}^{(\varepsilon)}(v_\theta)$ and $M^*$ are stochastic matrices, they can also be compared as $n\times n$ images under the Wasserstein distance arising from the theory of optimal transport \cite{villani2021topics}. While we find this improves robustness, it is computationally expensive when $n$ is large. A cheaper alternative is to compare corresponding rows using a discrete Wasserstein distance:
\begin{equation}\label{eq:background_M_loss2}
    \mathcal{J}_{M}(\theta)
    =
    \sum_{i=1}^n W_2\bigl(\widehat{M}_{i,:}^{(\varepsilon)}(v_\theta),M_{i,:}^*\bigr).
\end{equation}
We empirically find that \eqref{eq:background_M_loss2} is both robust to noise and computationally efficient. It effectively balances the pointwise sensitivity of the Frobenius norm with the large computational cost of evaluating the Wasserstein distance over full transition matrices.

\subsection{Comparing Invariant Measures}\label{subsec:invariant_comp}

Rather than using the full Markov matrix, one may attempt to extract the optimal parameters $\theta\in\Theta$ by comparing invariant measures. The training objective for invariant measure-based system identification is given by 
\begin{equation}\label{eq:background_IM_loss}
    \mathcal{J}_{\rho}(\theta)
    =
    \mathcal{D}(\rho^{(\varepsilon)}(v_{\theta}),\rho^*),
\end{equation}
where $\mathcal{D}:\mathcal{P}(X)\times \mathcal{P}(X)\to [0,\infty)$ measures the discrepancy between probability measures. In~\eqref{eq:background_IM_loss}, the model-simulated invariant measure $\rho^{(\varepsilon)}(v_{\theta})\in \mathbb{R}^{n}$ is a discrete measure over the cells $\{C_j\}_{j=1}^n$ extracted as the dominant left eigenvector of $\widehat{M}^{(\varepsilon)}(v_{\theta})$; see \eqref{eq:regularized_transition_matrix}. In our experiments, $\rho^*$ is computed as the dominant left eigenvector of the ground truth Markov matrix $M^*$, though any approach for estimating invariant measures can be used. To ensure the existence of a unique dominant eigenvector, we apply teleportation regularization which adds a small nonzero transition probability between all pairs of cells \cite{gleich2015pagerank}. 

Invariant measure matching uses only partial information of the full Markov matrix. In settings where the Markov matrix can be reliably estimated the objective \eqref{eq:background_M_loss} is likely to lead to improved system identification over \eqref{eq:background_IM_loss}. However, in certain applications where data are slowly sampled and noisy, estimation of the Markov matrix can be compromised. In such settings, the ground truth invariant measure $\rho^*$ may still be estimated reliably (see~\cite[Figure~1]{botvinick2023learning}), which motivates the use of \eqref{eq:background_IM_loss}.

\section{Theory}\label{subsec:galerkin}

This section is dedicated to analyzing theoretical properties of our finite-dimensional approximation to the PFO developed in Section \ref{sec:mesh}. In Section \ref{subsec:op_conv}, we consider the noise-free infinite-data limit and study convergence properties of \eqref{eq:regularized_transition_matrix}. In particular, we show strong convergence in an iterated limit where the regularization and cell discretizations are sequentially refined; see Theorem \ref{thm:convergence}. In Section \ref{subsec:MC_analysis}, we then turn to the finite-sample case and analyze the approximation error in constructing the transition matrix \eqref{eq:empirical_markov_background}. This leads us to  derive a principle of optimal variance reduction that informs our choice of cell-placement; see Proposition~\ref{prop:optimal_strat}.

\subsection{Convergence of the Operator}\label{subsec:op_conv}

We now begin our technical discussion.  Let $X$ be a compact metric space and let $\mu\in\mathcal{P}(X)$ be fixed. As in \Cref{def:PFO}, we consider the PFO $\mathcal{T}:L_{\mu}^1(X)\to L_{\mu}^1(X)$ based upon the non-singular discrete map $T:X\to X$. In the continuous-time case, $T$ can simply be taken as the time-$\Delta t$ flow map of some vector field. Now consider a sequence of partitions  $\{\{C_{i,n}\}_{i=1}^n\}_{n=1}^{\infty}$ of $X$ and, for $\varepsilon > 0$, a sequence of partitions of unity $\{\{\psi^{(\varepsilon)}_{i,n}\}_{i=1}^n\}_{n=1}^{\infty}$ belonging to $L_{\mu}^1(X)\cap L_{\mu}^{\infty}(X)$ satisfying \Cref{assumption1} and \Cref{assumption:2}, respectively.
\begin{assumption}\label{assumption1}
The sequence $\{\{C_{i,n}\}_{i=1}^n\}_{n=1}^{\infty}$ satisfies the following. 
    \begin{itemize}
\item \textit{(Partition)}: $C_{i,n}\cap C_{j,n} = \emptyset$ if $i \neq j$ and $\bigcup_{i=1}^n C_{i,n} = X.$
    \item \textit{(Increasing resolution)}: $\displaystyle\lim_{n\to\infty}\max_{1\leq i\leq n}(\textup{diam}(C_{i,n})) = 0$.
    \item \textit{(Positive measure)}: $\mu(C_{i,n}) > 0,$ for all $1\leq i \leq n$ and all $n\in\mathbb{N}$.
\end{itemize}
\end{assumption}
\begin{assumption}\label{assumption:2}
    The collection $\{\{\psi^{(\varepsilon)}_{i,n}\}_{i=1}^n\}_{n=1}^{\infty}$ satisfies the following properties.
    \begin{itemize}
        \item (\textit{Partition of unity}): For $\varepsilon > 0$ and $n\in\mathbb{N}$ fixed,
        $\sum_{i=1}^n \psi_{i,n}^{(\varepsilon)}(x) = 1$ and $\psi_{i,n}^{(\varepsilon)}(x)\geq 0.$ 
        \item (\textit{Approximation}): For $n\in\mathbb{N}$ and $i\leq n$ it holds $\psi_{i,n}^{(\varepsilon)} \xrightarrow[]{\varepsilon \to 0}\chi_{C_{i,n}}$ pointwise $\mu$-a.e.
    \end{itemize}
\end{assumption}
Note that $n\in\mathbb{N}$ represents the resolution of a given partition $\{C_{i,n}\}_{i=1}^n$, which for notational convenience we will occasionally suppress. Now, we define 
\begin{equation}\label{eq:phi_i}
    \phi_{i}:= \frac{\chi_{C_{i}}}{\mu(C_{i})}\in L_{\mu}^1(X) \cap L_{\mu}^{\infty}(X),\qquad 1\leq i \leq n,
\end{equation}
which is a normalized characteristic function on the cell $C_{i}.$ It also holds that $\phi_{i} d\mu = d\mu_{i},$ where $\mu_{i}\in\mathcal{P}(X)$ is the conditional measure defined by $\mu_i(B) :=\mu(C_i\cap B)/\mu(C_i)$, for all $B\in\mathscr{B}$. The subspace of $L_{\mu}^1(X)$ spanned by the collection of functions defined in \eqref{eq:phi_i} will be denoted  $\Delta_n:=\text{span}(\{\phi_{i,n}\}_{i=1}^n) \subseteq L_{\mu}^1(X)$. It is helpful to define the operator $\mathcal{Q}^{(n)}:L_{\mu}^1(X)\to \Delta_n$, where  
\begin{equation}\label{eq:projQ}
\mathcal{Q}^{(n)} f := \sum_{i=1}^n \frac{\langle f, \chi_{C_{i,n}}\rangle_\mu}{\langle \chi_{C_{i,n}},\chi_{C_{i,n}}\rangle_\mu} \chi_{C_{i,n}},
\end{equation}
where $\langle f_1,f_2\rangle_{\mu}:=\int_X f_1 f_2 d\mu$. Note that \eqref{eq:projQ} clearly indicates that $\mathcal{Q}^{(n)} f$ is a projection of $f$ onto $\Delta_n$.
Intuitively, $\mathcal{Q}^{(n)}f$ is a piecewise constant function describing the average value of $f$ over each cell.
Now, for $n\in\mathbb{N}$ and $\varepsilon > 0$, we define $\mathcal{L}^{(n,\varepsilon)}:\Delta_n\to \Delta_n$ by setting 
\begin{equation}\label{eq:ulam_projection_eps}
    \mathcal{L}^{(n,\varepsilon)}\phi_{i,n} := \sum_{j=1}^n M_{i,j}^{(n,\varepsilon)} \phi_{j,n}, \qquad M_{i,j}^{(n,\varepsilon)}:= \frac{\langle\psi_{j,n}^{(\varepsilon)}\circ T \, ,\, \chi_{C_{i},n}\rangle_\mu}{\langle\chi_{C_{i},n}\, , \,\chi_{C_{i},n} \rangle_\mu }.
\end{equation}
Note that $M^{(n,\varepsilon)}$ is the infinite-data limit of the regularized transition matrix defined in \eqref{eq:regularized_transition_matrix}.
In the case when $\psi_{j,n}^{(\varepsilon)} = \chi_{C_{j,n}}$, we have that 
\begin{equation}\label{eq:cond_int}
    M_{i,j} = \int_X \chi_{T^{-1}(C_j)} \phi_{i}d\mu =  \int_X \chi_{T^{-1}(C_j)}d\mu_i,
\end{equation}
and thus \eqref{eq:ulam_projection_eps} reduces to the usual Ulam method in which the entry $M_{i,j}$ describes the transition probability between cell $C_{i}$ and cell $C_{j}$, under the dynamics $T$; see~\eqref{eq:ulam_background}. 

Using the operator $\mathcal{L}^{(n,\varepsilon)}:\Delta_n\to \Delta_n$, we finally define $\mathcal{T}^{(n,\varepsilon)}:L_{\mu}^1(X)\to L_{\mu}^1(X)$ by setting $\mathcal{T}^{(n,\varepsilon)}:= \mathcal{L}^{(n,\varepsilon)}\mathcal{Q}^{(n)},$ which we regard as our approximation to the true PFO. \Cref{thm:convergence} establishes the main approximation properties of $\mathcal{T}^{(n,\varepsilon)}$. In particular, for any finite $n\in\mathbb{N}$ and $\varepsilon > 0$ we show that the operator preserves positivity and satisfies the mass-conservation principle. Moreover, as the parameters $n\in\mathbb{N}$ and $\varepsilon > 0$ are refined, we obtain strong operator convergence to the true PFO.
\begin{theorem}\label{thm:convergence}
    For $\varepsilon > 0$ and $n\in\mathbb{N}$ fixed,  the operator $\mathcal{T}^{(n,\varepsilon)}:L_{\mu}^1(X)\to L_{\mu}^1(X)$ is Markov. Moreover, we obtain the following strong operator convergence:
    $$\lim_{n\to \infty}\lim_{\varepsilon \to 0} \|\mathcal{T}^{(n,\varepsilon)}f - \mathcal{T}f\|_{L_{\mu}^1} = 0,\qquad \forall f\in L_{\mu}^1(X).$$
\end{theorem}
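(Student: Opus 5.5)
The plan has two parts: the Markov property for fixed $n,\varepsilon$, and then the iterated limit, reduced to the classical Ulam result by first letting $\varepsilon\to0$ with $n$ fixed.

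\textbf{Markov property.} Fix $n$ and $\varepsilon>0$ and write $f\in L^1_\mu(X)$ as
\[
\mathcal{Q}^{(n)}f=\sum_i a_i\phi_i, \qquad a_i=\int_{C_i} f\,d\mu .
\]
Then
\[
\mathcal{T}^{(n,\varepsilon)}f=\sum_j\Big(\sum_i a_i M^{(n,\varepsilon)}_{ij}\Big)\phi_j .
\]
\emph{Positivity.} If $f\ge 0$, then $a_i\ge0$. Also $M^{(n,\varepsilon)}_{ij}\ge0$, since $\psi_j^{(\varepsilon)}\ge0$. Hence $\mathcal{T}^{(n,\varepsilon)}f\ge0$.
\emph{Mass conservation.} Each $\phi_j$ integrates to one. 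By the partition-of-unity property, $\sum_j\psi_j^{(\varepsilon)}\circ T\equiv1$, so $\sum_j M^{(n,\varepsilon)}_{ij}=1$. Therefore
\[
\int \mathcal{T}^{(n,\varepsilon)}f\,d\mu=\sum_i a_i=\int f\,d\mu .
\]
Linearity is clear, so the operator is Markov. The same computation gives the uniform bound $\|\mathcal{T}^{(n,\varepsilon)}\|_{L^1\to L^1}\le1$.

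\textbf{Inner limit $\varepsilon\to0$ with $n$ fixed.} We have $\psi_j^{(\varepsilon)}\to\chi_{C_j}$ $\mu$-a.e. and $0\le\psi_j^{(\varepsilon)}\le1$. To transfer this to $\psi_j^{(\varepsilon)}\circ T$, note that the exceptional null set $E$ satisfies $\mu(T^{-1}E)=0$ by non-singularity. So $\psi_j^{(\varepsilon)}\circ T\to\chi_{C_j}\circ T=\chi_{T^{-1}C_j}$ $\mu$-a.e. (Pointwise a.e. convergence along $\varepsilon\to0$ is read sequentially, or along any sequence $\varepsilon_k\to0$.)

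Dominated convergence, with bound $\chi_{C_i}$, gives
\[
M^{(n,\varepsilon)}_{ij}\to M_{ij}=\frac{\mu(C_i\cap T^{-1}C_j)}{\mu(C_i)} .
\]
Since $\mathcal{T}^{(n,\varepsilon)}f$ is a finite combination of the fixed functions $\phi_j$ with coefficients continuous in $M$, we get
\[
\|\mathcal{T}^{(n,\varepsilon)}f-\mathcal{T}^{(n)}f\|_{L^1}\le\sum_{i,j}|a_i|\,\big|M^{(n,\varepsilon)}_{ij}-M_{ij}\big|\to0,
\]
where $\mathcal{T}^{(n)}$ is the classical Ulam operator.

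\textbf{Outer limit $n\to\infty$: the main step.} We must show $\mathcal{T}^{(n)}f\to\mathcal{T}f$ in $L^1$. The key identity is
\[
\mathcal{T}^{(n)}=\mathcal{Q}^{(n)}\mathcal{T}\mathcal{Q}^{(n)} .
\]
To verify it, apply $\mathcal{T}$ to $\phi_i$ and use \eqref{eq:PFO_definition} with $B=C_j$:
\[
\int_{C_j}\mathcal{T}\phi_i\,d\mu=\int_{T^{-1}C_j}\phi_i\,d\mu=M_{ij}.
\]
Hence $\mathcal{Q}^{(n)}\mathcal{T}\phi_i=\sum_j M_{ij}\phi_j=\mathcal{L}\phi_i$.

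Both $\mathcal{Q}^{(n)}$ and $\mathcal{T}$ are $L^1$ contractions. Therefore
\[
\|\mathcal{T}^{(n)}f-\mathcal{T}f\|\le\|\mathcal{Q}^{(n)}\mathcal{T}(\mathcal{Q}^{(n)}f-f)\|+\|\mathcal{Q}^{(n)}(\mathcal{T}f)-\mathcal{T}f\|\le\|\mathcal{Q}^{(n)}f-f\|+\|\mathcal{Q}^{(n)}g-g\|,
\]
with $g=\mathcal{T}f$.

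It remains to show $\mathcal{Q}^{(n)}h\to h$ in $L^1$ for every $h\in L^1_\mu$.
\begin{enumerate}
\item Since $X$ is compact metric and $\mu$ is a finite Borel measure, continuous functions are dense in $L^1_\mu$.
\item For continuous $h$, uniform continuity together with $\max_i\operatorname{diam}C_{i,n}\to0$ gives $\|\mathcal{Q}^{(n)}h-h\|_\infty\le\omega_h(\max_i\operatorname{diam}C_{i,n})\to0$, where $\omega_h$ is the modulus of continuity of $h$.
\item For general $h$, a $3\epsilon$ argument with the contraction $\|\mathcal{Q}^{(n)}\|\le1$ finishes it.
\end{enumerate}
This is the step I expect to carry the real content. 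Positivity of $\mu(C_{i,n})$ ensures $\mathcal{Q}^{(n)}$ is well defined, and measurability of the cells is implicitly assumed. Combining the inner and outer limits yields the iterated limit.
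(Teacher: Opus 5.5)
Your proof is correct and follows the same route as the paper. The Markov property comes from nonnegativity and the partition of unity. The inner limit $\varepsilon\to0$ follows by dominated convergence on the matrix entries, with non-singularity of $T$ used to pull back the exceptional null set. The outer limit uses the identity $\mathcal{T}^{(n)}=\mathcal{Q}^{(n)}\mathcal{T}\mathcal{Q}^{(n)}$, $L^1$-contractivity, and density of continuous functions to get $\mathcal{Q}^{(n)}h\to h$. Your direct coefficient bound $\sum_{i,j}|a_i|\,|M^{(n,\varepsilon)}_{ij}-M_{ij}|$ is a slightly cleaner version of the paper's second dominated-convergence step.
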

Our proof of \Cref{thm:convergence} is presented in Section \ref{subsec:3_conv}. 

\subsubsection{Proof of Convergence.}\label{subsec:3_conv}
We begin by establishing several lemmas which are necessary for our proof of \Cref{thm:convergence}. Many of the following lemmas generalize results from \cite{li1976finite} from the setting of one-dimensional interval maps to arbitrary compact metric spaces. In what follows, we omit the superscript $\varepsilon$ from the operators introduced in Section~\ref{sec:mesh} when considering the case $\psi_{j,n}^{(\varepsilon)} = \chi_{C_j}. $  We begin by checking that the projected PFO $\mathcal{L}^{(n)}$, which is clearly linear, is also positive and mass-preserving.
\begin{lemma}\label{lemma:g1}
If $h\in \Delta_n$ satisfies $h\geq 0$, then $\mathcal{L}^{(n)}h \geq 0$ as well and $\int_X \mathcal{L}^{(n)}h d\mu = \int_X h d\mu.$
\end{lemma}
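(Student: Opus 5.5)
Since $h\in\Delta_n$, we can write $h=\sum_{i=1}^n a_i\phi_{i,n}$ with coefficients $a_i\in\mathbb{R}$. The cells $C_{i,n}$ are disjoint and each has positive measure by \Cref{assumption1}. Consequently $h\geq 0$ holds $\mu$-a.e. if and only if $a_i\geq 0$ for every $i$: on $C_{i,n}$ the function $h$ equals the constant $a_i/\mu(C_{i,n})$, and it is this positive-measure condition that lets us read off the coefficients. By linearity and the definition \eqref{eq:ulam_projection_eps} with $\psi_{j,n}=\chi_{C_{j,n}}$,
\[
\mathcal{L}^{(n)}h=\sum_{j=1}^n\Bigl(\sum_{i=1}^n a_i M_{i,j}\Bigr)\phi_{j,n}.
\]
The plan is to prove two facts about the matrix $M$ and then read off both claims from this expression.

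The first fact is nonnegativity of the entries. By \eqref{eq:cond_int}, $M_{i,j}=\mu(C_{i,n}\cap T^{-1}(C_{j,n}))/\mu(C_{i,n})\geq 0$. It follows that each coefficient $\sum_i a_iM_{i,j}$ is nonnegative. Since $\phi_{j,n}\geq 0$, we get $\mathcal{L}^{(n)}h\geq 0$.

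The second fact is that $M$ is row-stochastic. The sets $\{T^{-1}(C_{j,n})\}_{j=1}^n$ partition $X$, because preimages preserve disjointness and unions and the $C_{j,n}$ partition $X$. Therefore $\sum_j \chi_{T^{-1}(C_{j,n})}=1$, and integrating against $\mu_i$ gives $\sum_j M_{i,j}=1$. Now use $\int_X\phi_{j,n}\,d\mu=1$ to compute
\[
\int_X\mathcal{L}^{(n)}h\,d\mu=\sum_j\sum_i a_iM_{i,j}=\sum_i a_i=\int_X h\,d\mu.
\]

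There is no real obstacle in this argument. The only point needing care is the identification of $h\ge0$ with $a_i\ge0$, which relies on $\mu(C_{i,n})>0$, and the observation that the same argument applies verbatim for general $\varepsilon$. For the regularized case, the partition-of-unity and nonnegativity properties of \Cref{assumption:2} replace the preimage-partition argument, which is how the Markov property in \Cref{thm:convergence} will later follow.
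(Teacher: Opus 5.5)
Your proof is correct and follows the same route as the paper. Both expand $h=\sum_i a_i\phi_{i,n}$, obtain positivity from $a_i, M_{i,j}\ge 0$, and obtain mass conservation from $\int_X\phi_{j,n}\,d\mu=1$ together with $\sum_j M_{i,j}=1$; you additionally justify two points the paper takes for granted, namely that $h\ge 0$ forces $a_i\ge 0$ (via $\mu(C_{i,n})>0$) and that the rows of $M$ sum to one (via the partition $\{T^{-1}(C_{j,n})\}_{j=1}^n$ of $X$).
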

\begin{proof}
 Write $h = \sum_{i=1}^n a_i \phi_{i,n}$, with $a_i \geq 0$ for $1\leq i \leq n$. Then, by linearity we have that 
\begin{equation}\label{eq:eq1}
    \mathcal{L}^{(n)}h  = \sum_{i=1}^n a_i \mathcal{L}^{(n)}\phi_{i,n} = \sum_{i=1}^n \sum_{j=1}^n a_i M_{i,j}^{(n)}\phi_{j,n}.
\end{equation}
Since $a_i,M_{i,j}^{(n)}\geq 0$ for each $1\leq i,j\leq n$, it holds that $\mathcal{L}^{(n)}h \geq 0$. Moreover, using the above expression for $\mathcal{L}^{(n)}h$, we have
\begin{align*}
    \int_X \mathcal{L}^{(n)}h d\mu =\sum_{i=1}^n \sum_{j=1}^n\bigg( a_i M_{i,j}^{(n)}\int_X\phi_{j,n}d\mu\bigg) = \sum_{i=1}^n a_i \sum_{j=1}^n M_{i,j}^{(n)} = \sum_{i=1}^n a_i = \int_X h d\mu,
\end{align*}
which completes the proof.
\end{proof}
We next establish that the projection $\mathcal{Q}^{(n)}$ is also positive and mass-preserving. 
\begin{lemma}\label{lemma:g2}
If $f\in L_{\mu}^1(X)$ satisfies $f\geq 0$, then $\mathcal{Q}^{(n)} f \geq 0$ and $\int_X \mathcal{Q}^{(n)} f d\mu = \int_X fd\mu$. Moreover, for any $g\in L_{\mu}^1(X)$ we have $\int_X |\mathcal{Q}^{(n)} g|d\mu \leq \int_X |g|d\mu$.
\end{lemma}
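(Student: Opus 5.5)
The plan is to work directly from the explicit formula \eqref{eq:projQ}. First we rewrite $\mathcal{Q}^{(n)}$ in a form that makes all three properties transparent. Since $\langle \chi_{C_i},\chi_{C_i}\rangle_\mu = \mu(C_i) > 0$ by the positive measure condition of \Cref{assumption1}, we have
\[
\mathcal{Q}^{(n)} f = \sum_{i=1}^n \Bigl(\frac{1}{\mu(C_i)}\int_{C_i} f\,d\mu\Bigr)\chi_{C_i}.
\]
That is, $\mathcal{Q}^{(n)}f$ is the piecewise constant function whose value on $C_i$ is the $\mu$-average of $f$ over $C_i$. The partition property guarantees that the cells are disjoint and cover $X$, so at each point exactly one term of the sum is nonzero.

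\textbf{Positivity.} If $f\ge 0$ $\mu$-a.e., then each coefficient $\frac{1}{\mu(C_i)}\int_{C_i} f\,d\mu$ is nonnegative. Since every $\chi_{C_i}\ge 0$, it follows that $\mathcal{Q}^{(n)} f\ge 0$.

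\textbf{Mass preservation.} Integrating term by term gives
\[
\int_X \mathcal{Q}^{(n)} f\,d\mu = \sum_i \frac{\mu(C_i)}{\mu(C_i)}\int_{C_i} f\,d\mu = \sum_i\int_{C_i} f\,d\mu = \int_X f\,d\mu,
\]
where the last equality uses disjointness and the covering property of the partition. This step does not actually require $f\ge 0$; it holds for every $f\in L^1_\mu(X)$.

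\textbf{$L^1$ contraction.} Let $g\in L^1_\mu(X)$ be arbitrary. Because the cells are disjoint, $|\mathcal{Q}^{(n)} g| = \sum_i \bigl|\frac{1}{\mu(C_i)}\int_{C_i} g\,d\mu\bigr|\chi_{C_i}$. Applying the triangle inequality for integrals, $\bigl|\int_{C_i} g\,d\mu\bigr|\le \int_{C_i}|g|\,d\mu$, in each cell yields the pointwise bound $|\mathcal{Q}^{(n)} g|\le \mathcal{Q}^{(n)}|g|$. Integrating this bound and then using mass preservation applied to $|g|\ge 0$ gives
\[
\int_X|\mathcal{Q}^{(n)} g|\,d\mu\le \int_X \mathcal{Q}^{(n)}|g|\,d\mu = \int_X|g|\,d\mu.
\]

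There is no real obstacle in this argument. The only points that need care are invoking \Cref{assumption1} so that the cells have positive measure (making the normalization well defined) and are disjoint (making the pointwise absolute value of the sum split cell by cell), and noting that all integrals are finite because $f\in L^1_\mu(X)$.
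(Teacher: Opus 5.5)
Your proof is correct and follows the paper's argument. Positivity and mass preservation come straight from the cell-average form of $\mathcal{Q}^{(n)}$, and the contraction comes from bounding $|\mathcal{Q}^{(n)} g|$ by $\mathcal{Q}^{(n)}|g|$ and then using mass preservation. The only cosmetic difference is how that bound is obtained: the paper splits $g = g^+ - g^-$ and uses linearity and positivity, whereas you get $|\mathcal{Q}^{(n)} g| \le \mathcal{Q}^{(n)}|g|$ cell by cell from the integral triangle inequality.
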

\begin{proof}
Assume that $f\in L_{\mu}^1(X) $ satisfies $f\geq 0.$ Then the positivity of $\mathcal{Q}^{(n)} f$ is clear from the definition given in~\eqref{eq:projQ}, as $\int_{C_{i,n}} f d\mu \geq 0$, for each $1\leq i \leq n$. We now verify the mass-conservation property. Indeed, notice that
    \begin{align*}
    \int_X \mathcal{Q}^{(n)} f d\mu &=\int_{X}\sum_{j=1}^n \bigg(\frac{1}{\mu(C_{j,n})} \int_{C_{j,n}} f(y) d\mu(y)\bigg)\chi_{C_{j,n}} d\mu(x) \\
    & = \sum_{j=1}^n \int_{C_{j,n}}f(y)d\mu(y) = \int_X f d\mu.
    \end{align*}

   We now prove the last part of the lemma, stating that $\mathcal{Q}^{(n)}$ is a contraction. Indeed, when $g$ has both positive and negative parts, we can decompose $g = g^+ - g^-$ where $g^+,g^-\geq 0$, $|g| = g^+ + g^-$, and apply linearity of $\mathcal{Q}^{(n)}$ to deduce 
    $$\int_X |\mathcal{Q}^{(n)} g| d\mu \leq \int_X \mathcal{Q}^{(n)} g^+d\mu + \int_X \mathcal{Q}^{(n)} g^- d\mu = \int_X g^+ d\mu + \int_X g^- d\mu = \int_X |g|d\mu.$$
    Above, we have also used the fact that $\mathcal{Q}^{(n)}$ is positive.
\end{proof}

The following lemma highlights the relationship between $\mathcal{T}^{(n)}$ and $\mathcal{T}$.
\begin{lemma}\label{lemma:g3}
    It holds that $\mathcal{T}^{(n)} =  \mathcal{Q}^{(n)}\mathcal{T}\mathcal{Q}^{(n)} .$
\end{lemma}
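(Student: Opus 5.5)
Since $\mathcal{T}^{(n)} = \mathcal{L}^{(n)}\mathcal{Q}^{(n)}$ and both sides of the claimed identity are linear and factor through $\mathcal{Q}^{(n)}$, and since $\mathcal{Q}^{(n)}$ is idempotent with range $\Delta_n$, it suffices to show that $\mathcal{L}^{(n)} h = \mathcal{Q}^{(n)}\mathcal{T} h$ for every $h\in\Delta_n$. Indeed, once this holds, for any $f\in L^1_\mu(X)$ we apply it with $h=\mathcal{Q}^{(n)}f\in\Delta_n$ to get $\mathcal{T}^{(n)}f=\mathcal{L}^{(n)}\mathcal{Q}^{(n)}f=\mathcal{Q}^{(n)}\mathcal{T}\mathcal{Q}^{(n)}f$. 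By linearity of $\mathcal{L}^{(n)}$, $\mathcal{Q}^{(n)}$ and $\mathcal{T}$, we may further reduce to the basis elements $h=\phi_{i,n}$, $1\le i\le n$.

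For a fixed basis element we compute $\mathcal{Q}^{(n)}\mathcal{T}\phi_i$ directly from \eqref{eq:projQ}. Its coefficient on $\chi_{C_j}$ is
\[
\frac{1}{\mu(C_j)}\int_{C_j}\mathcal{T}\phi_i\,d\mu .
\]
Applying the defining relation \eqref{eq:PFO_definition} with $B=C_j$ gives
\[
\int_{C_j}\mathcal{T}\phi_i\,d\mu=\int_{T^{-1}(C_j)}\phi_i\,d\mu=\frac{\mu(C_i\cap T^{-1}(C_j))}{\mu(C_i)}=M_{i,j},
\]
which agrees with \eqref{eq:cond_int}. Hence
\[
\mathcal{Q}^{(n)}\mathcal{T}\phi_i=\sum_j M_{i,j}\frac{\chi_{C_j}}{\mu(C_j)}=\sum_j M_{i,j}\phi_j=\mathcal{L}^{(n)}\phi_i .
\]

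Finally, we check that $\mathcal{Q}^{(n)}$ acts as the identity on $\Delta_n$, so that $\mathcal{Q}^{(n)}\phi_i=\phi_i$. This follows because the cells are disjoint: $\langle \chi_{C_i},\chi_{C_j}\rangle_\mu=\delta_{ij}\mu(C_i)$. It also uses the positive-measure condition of \Cref{assumption1}, which makes $\phi_i$ well defined.

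No step here is a genuine obstacle. The only points needing care are that $\phi_i\in L^1_\mu$, so that $\mathcal{T}\phi_i$ is defined, and that the weights in $\mathcal{L}^{(n)}$ are exactly the conditional Ulam probabilities when $\psi_j=\chi_{C_j}$.
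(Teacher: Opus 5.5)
Your proof is correct and takes essentially the same route as the paper: you reduce to $h\in\Delta_n$, expand $\mathcal{Q}^{(n)}\mathcal{T}\phi_{i,n}$ via \eqref{eq:projQ}, and identify the coefficients $\int_{C_{j,n}}\mathcal{T}\phi_{i,n}\,d\mu$ with $M^{(n)}_{i,j}$ using the defining relation \eqref{eq:PFO_definition}, a step the paper leaves implicit. Your closing paragraph on $\mathcal{Q}^{(n)}$ acting as the identity on $\Delta_n$ is true but not needed, since applying the reduction with $h=\mathcal{Q}^{(n)}f$ already gives the identity.
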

\begin{proof}
    It suffices to show that 
    \begin{equation}\label{eq:wts}
    \mathcal{L}^{(n)} h = \mathcal{Q}^{(n)}\mathcal{T} h,\qquad \text{where\,\,}h = \sum_{i=1}^n a_i \phi_{i,n}.
    \end{equation}
    Towards this, it follows by \eqref{eq:projQ} that 
    $$
        \mathcal{Q}^{(n)} \mathcal{T} h = \sum_{j=1}^n \sum_{i=1}^n \bigg( a_i \int_{C_{j,n}} \mathcal{T}\phi_{i,n} d\mu\bigg)  \phi_{j,n}= \sum_{j=1}^n \sum_{i=1}^n a_i M^{(n)}_{i,j}\phi_{j,n}= \mathcal{L}^{(n)}h,
   $$
    where the final equality is a result of \eqref{eq:eq1}.
\end{proof}

We now rely on the fact that $X$ is a compact metric space, writing $d(\cdot,\cdot)$ to denote the metric on $X$, in order to prove the convergence of  $\mathcal{Q}^{(n)}$; see \Cref{lemma:g4}
\begin{lemma}\label{lemma:g4}
    For all $f\in L_{\mu}^1$, it holds that $\mathcal{Q}^{(n)}f \to f$ in $L_{\mu}^1(X)$. 
\end{lemma}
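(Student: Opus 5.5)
We will use a density argument: first prove the convergence for continuous functions, and then extend it to all of $L_\mu^1(X)$ using the contraction property of \Cref{lemma:g2}.

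\emph{Continuous functions.} Since $X$ is a compact metric space, the finite Borel measure $\mu$ is regular, so $C(X)$ is dense in $L_\mu^1(X)$. Fix $g\in C(X)$. Because $X$ is compact, $g$ is uniformly continuous, with modulus of continuity $\omega_g(\delta):=\sup\{|g(x)-g(y)|: d(x,y)\le\delta\}$, and $\omega_g(\delta)\to0$ as $\delta\to0$. Write $\delta_n:=\max_i \operatorname{diam}(C_{i,n})$. For $x\in C_{i,n}$, the definition \eqref{eq:projQ} together with the positive-measure assumption gives
\[
|\mathcal{Q}^{(n)}g(x)-g(x)| = \Bigl|\tfrac{1}{\mu(C_{i,n})}\int_{C_{i,n}}(g(y)-g(x))\,d\mu(y)\Bigr| \le \omega_g(\delta_n).
\]
The cells partition $X$, so $\|\mathcal{Q}^{(n)}g-g\|_{L^1_\mu}\le \omega_g(\delta_n)\,\mu(X)$. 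By the increasing-resolution hypothesis of \Cref{assumption1}, $\delta_n\to0$, and hence this bound tends to $0$.

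\emph{General $f$.} Given $f\in L^1_\mu$ and $\eta>0$, choose $g\in C(X)$ with $\|f-g\|_{L^1_\mu}<\eta$. By linearity and the $L^1$-contraction of $\mathcal{Q}^{(n)}$ (\Cref{lemma:g2}),
\[
\|\mathcal{Q}^{(n)}f-f\|\le \|\mathcal{Q}^{(n)}(f-g)\|+\|\mathcal{Q}^{(n)}g-g\|+\|g-f\|\le 2\eta+\|\mathcal{Q}^{(n)}g-g\|.
\]
Taking $\limsup_{n\to\infty}$ and then letting $\eta\to0$ completes the proof.

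The only delicate step is the density of $C(X)$ in $L^1_\mu(X)$. It relies on the regularity of finite Borel measures on metric spaces, which is standard: closed sets can be approximated by Urysohn-type functions such as $x\mapsto\max(0,1-k\,d(x,F))$. This is also the point where compactness is used, since it supplies the uniform continuity needed in the first step. Measurability of the cells is implicit in \Cref{assumption1}.
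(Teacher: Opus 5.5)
Your proof is correct and follows essentially the same route as the paper. Both arguments approximate $f$ in $L^1_\mu$ by a continuous $g$, bound $\|\mathcal{Q}^{(n)}g-g\|_{L^1_\mu}$ cell by cell using uniform continuity on the compact space and the vanishing cell diameters, and finish with the triangle inequality and the $L^1$-contraction of $\mathcal{Q}^{(n)}$ from \Cref{lemma:g2}; your modulus-of-continuity and $\limsup$ formulation is only a cosmetic variant of the paper's $\varepsilon/3$ bookkeeping.
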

\begin{proof}
    Note that for any $\varepsilon > 0$, we can find some $g\in C(X)$ such that $\|g-f\|_{L_{\mu}^1} < \varepsilon/3$. Since $X$ is compact $g$ is uniformly continuous.  Thus, there exists $\delta > 0$ such that if $d(x,y)<\delta$, then $|g(x)-g(y)| < \varepsilon/3$. By the assumption that $\displaystyle\lim_{n\to\infty}\max_{1\leq i\leq n}(\textup{diam}(C_{i,n})) = 0$, we may choose $N\in\mathbb{N}$ such that for all $n\geq N$ it holds that $\textup{diam}(C_{i,n}) < \delta$, for each $1\leq i \leq n$. Thus, for all $n \geq N$ if $x,y\in C_{i,n}$, for some $1\leq i \leq n$, it necessarily holds that $|g(x)-g(y)|<\varepsilon/3$. Using this fact, we have for $n\geq N$ and $1\leq i \leq n$ that
    \begin{align*}
        \quad \int_{C_{i,n}}  |\mathcal{Q}^{(n)} g(x) - g(x)|d\mu(x) 
        &=\int_{C_{i,n}}\bigg| \sum_{j=1}^n \bigg(\frac{1}{\mu(C_{j,n})} \int_{C_{j,n}} g(y) d\mu(y) \bigg) \chi_{C_{j,n}}(x) - g(x)\bigg| d\mu(x)\\
        &=\int_{C_{i,n}}\bigg| \frac{1}{\mu(C_{i,n})} \int_{C_{i,n}} g(y) d\mu(y)  - g(x)\bigg| d\mu(x)\\
        &\leq \int_{C_{i,n}}\frac{1}{\mu(C_{i,n})} \int_{C_{i,n}}| g(y)-g(x)| d\mu(y)  d\mu(x)\leq \frac{\varepsilon\mu(C_{i,n}) }{3}.
    \end{align*}
    It then follows that 
    $$\int_X |\mathcal{Q}^{(n)} g - g| d\mu = \sum_{i=1}^n \int_{C_{i,n}} |\mathcal{Q}^{(n)}g -g| d\mu< \frac{\varepsilon}{3} \sum_{i=1}^n \mu(C_{i,n}) = \frac{\varepsilon}{3}. $$
   Putting everything together, we now have 
    \begin{align*}
        \int_X|\mathcal{Q}^{(n)} f - f| d\mu &\leq \int_X |\mathcal{Q}^{(n)} f - \mathcal{Q}^{(n)} g| d\mu + \int_X |\mathcal{Q}^{(n)} g - g |d\mu + \int_X |f - g|d\mu \\
        &\leq 2 \int_X |f-g|d\mu + \int_X |\mathcal{Q}^{(n)} g - g|d\mu \leq \varepsilon,
    \end{align*}
    which concludes the proof.
\end{proof}
We now prove \Cref{thm:g1}, which relies on Lemmas \ref{lemma:g1}, \ref{lemma:g2}, \ref{lemma:g3}, and \ref{lemma:g4}. \Cref{thm:g1} should be viewed as the version of \Cref{thm:convergence} without any regularization.
\begin{theorem}\label{thm:g1}
    $\mathcal{T}^{(n)}$ is linear, positive, and Markov. Moreover, $$\lim_{n\to \infty}\|\mathcal{T}^{(n)}f- \mathcal{T}f\|_{L_{\mu}^1} = 0, \qquad \forall f\in L_{\mu}^1(X).$$
\end{theorem}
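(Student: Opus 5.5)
The argument rests on the identity $\mathcal{T}^{(n)} = \mathcal{L}^{(n)}\mathcal{Q}^{(n)}$, which \Cref{lemma:g3} rewrites as $\mathcal{Q}^{(n)}\mathcal{T}\mathcal{Q}^{(n)}$.

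\textbf{Linearity.} Both $\mathcal{Q}^{(n)}$ and $\mathcal{L}^{(n)}$ are linear by construction, so their composition $\mathcal{T}^{(n)}$ is linear.

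\textbf{Positivity and mass conservation.} Let $f\geq 0$. By \Cref{lemma:g2}, $\mathcal{Q}^{(n)}f\in\Delta_n$ is nonnegative and has the same integral as $f$. By \Cref{lemma:g1}, applying $\mathcal{L}^{(n)}$ to this nonnegative element of $\Delta_n$ keeps it nonnegative and preserves its integral. Hence $\mathcal{T}^{(n)}f\geq 0$ and $\int_X\mathcal{T}^{(n)}f\,d\mu=\int_X f\,d\mu$, so $\mathcal{T}^{(n)}$ is Markov.

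\textbf{Contraction estimates.} The same splitting $g=g^+-g^-$ used in \Cref{lemma:g2} shows that any Markov operator is an $L^1_\mu$ contraction. This applies to $\mathcal{T}^{(n)}$, and also to $\mathcal{T}$, since $\mathcal{T}$ is Markov.

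\textbf{Convergence.} Fix $f\in L^1_\mu(X)$. Using $\mathcal{T}^{(n)}=\mathcal{Q}^{(n)}\mathcal{T}\mathcal{Q}^{(n)}$, split the error as
\begin{equation*}
\|\mathcal{T}^{(n)}f-\mathcal{T}f\|_{L^1_\mu}\leq \|\mathcal{Q}^{(n)}\mathcal{T}(\mathcal{Q}^{(n)}f-f)\|_{L^1_\mu}+\|\mathcal{Q}^{(n)}\mathcal{T}f-\mathcal{T}f\|_{L^1_\mu}.
\end{equation*}
For the first term, \Cref{lemma:g2} says $\mathcal{Q}^{(n)}$ is a contraction and $\mathcal{T}$ is a contraction, so the term is at most $\|\mathcal{Q}^{(n)}f-f\|_{L^1_\mu}$. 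This tends to $0$ by \Cref{lemma:g4}.

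For the second term, $\mathcal{T}f$ is a fixed element of $L^1_\mu(X)$ that does not depend on $n$. Applying \Cref{lemma:g4} to $\mathcal{T}f$ shows this term also tends to $0$.

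\textbf{Main obstacle.} The delicate point is the contraction property of $\mathcal{T}$ itself, since it is not proved earlier in the paper. I would derive it from the defining relation \eqref{eq:PFO_definition}. Taking $B=X$ there gives mass preservation: $\int_X\mathcal{T}f\,d\mu=\int_X f\,d\mu$. Positivity is the standard Markov property stated after \Cref{def:PFO}. The $L^1$ bound then follows by applying these to $f^+$ and $f^-$ separately, exactly as in \Cref{lemma:g2}.
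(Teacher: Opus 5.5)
Your proposal is correct and follows the paper's proof essentially step for step: you get the Markov property by composing Lemmas \ref{lemma:g2} and \ref{lemma:g1}, and you use the same triangle-inequality split of $\mathcal{Q}^{(n)}\mathcal{T}\mathcal{Q}^{(n)}f-\mathcal{T}f$, with both terms controlled by Lemma \ref{lemma:g4}. The only difference is minor. The paper bounds the first term by $\|\mathcal{T}\mathcal{Q}^{(n)}f-\mathcal{T}f\|_{L^1_\mu}$ and cites boundedness of Markov operators from Lasota--Mackey, whereas you derive the $L^1_\mu$-contractivity of $\mathcal{T}$ directly from \eqref{eq:PFO_definition}, which is self-contained and equally valid.
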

\begin{proof}[Proof of \Cref{thm:g1}]
The fact that $\mathcal{T}^{(n)}$ is a Markov operator is a direct consequence of \Cref{lemma:g1} and \Cref{lemma:g2}, as the composition of mass-preserving operators is still mass-preserving, and the composition of positive operators is still positive. We now prove the convergence result. Let $f\in L_{\mu}^1(X)$ be fixed and notice that 
    \begin{align}
        \| (\mathcal{T}^{(n)} - \mathcal{T}) f \|_{L_{\mu}^1} &= \| \mathcal{Q}^{(n)} \mathcal{T} \mathcal{Q}^{(n)}f- \mathcal{T} f \|_{L_{\mu}^1} \label{eq:exp1}\\
        & \leq \| \mathcal{Q}^{(n)} \mathcal{T} \mathcal{Q}^{(n)}f - \mathcal{Q}^{(n)} \mathcal{T} f\|_{L_{\mu}^1} + \|\mathcal{Q}^{(n)} \mathcal{T} f- \mathcal{T} f \|_{L_{\mu}^1} \label{eq:exp3} \\
        & \leq \|\mathcal{T} \mathcal{Q}^{(n)} f  - \mathcal{T} f\|_{L_{\mu}^1} + \|\mathcal{Q}^{(n)} \mathcal{T} f- \mathcal{T} f \|_{L_{\mu}^1}\xrightarrow[]{n\to\infty} 0.  \label{eq:exp2} 
    \end{align}
    Above, \eqref{eq:exp1} follows from \Cref{lemma:g3}. Moreover, \eqref{eq:exp2} follows from \eqref{eq:exp3}, as it was established that $\mathcal{Q}^{(n)}$ is a contraction on $L_{\mu}^1(X)$ in \Cref{lemma:g2}. The first term in \eqref{eq:exp2} then goes to zero as $\mathcal{T}$ is a bounded linear operator, due to the fact that it is Markov \cite[Proposition 3.1.1]{lasota2013chaos}, and hence continuous. Thus, since $\mathcal{Q}^{(n)} f \to f$ in $L_{\mu}^1(X)$ by \Cref{lemma:g4}, it holds that $\mathcal{T}\mathcal{Q}^{(n)}f \to \mathcal{T}f$ in $L_{\mu}^1(X)$, as well. The second term in \eqref{eq:exp2} also goes to zero as a consequence of \Cref{lemma:g4}. 
\end{proof}
We can now prove \Cref{thm:convergence}, which establishes convergence of the regularized Galerkin projection appearing in Section \ref{sec:mesh}.
\begin{proof}[Proof of \Cref{thm:convergence}]
    \Cref{lemma:g2} shows that $\mathcal{Q}^{(n)}:L_{\mu}^1(X)\to \Delta_n$ is mass-preserving and positive. Thus, to show that $\mathcal{T}^{(n,\varepsilon)} = \mathcal{L}^{(n,\varepsilon)}\mathcal{Q}^{(n)}$ is Markov, it suffices to show that $\mathcal{L}^{(n,\varepsilon)}:\Delta_n\to \Delta_n$ is also mass-preserving and positive. We remark that positivity is clear from the definition in \eqref{eq:ulam_projection_eps}, as $\phi_{i,n}(x), \psi_{i,n}^{(\varepsilon)}(x)\geq 0$ for all $x\in X$,  $n\in\mathbb{N}$, $i\leq n$, and $\varepsilon > 0$. Thus, we will focus on verifying the mass-preservation property. Towards this, let $h\in \Delta_n$ be fixed with $h\geq 0$ and write
$h = \sum_{i=1}^n a_i \phi_{i,n}$. Now, notice that 
\begin{align}
    \int_X \mathcal{L}^{(n,\varepsilon)}hd\mu = \sum_{i=1}^n a_i \bigg(\int_X \mathcal{L}^{(n,\varepsilon)} \phi_{i,n} d\mu\bigg)
    &= \sum_{i=1}^n a_i \sum_{j=1}^n\bigg( \int_X  M_{i,j}^{(n,\varepsilon)}\phi_{j,n}d\mu\bigg) \label{eq:l1}\\
    &= \sum_{i=1}^n a_i \sum_{j=1}^nM_{i,j}^{(n,\varepsilon)}\label{eq:l2}\\
    &=\sum_{i=1}^n a_i \bigg(\int_X \sum_{j=1}^n\psi^{(\varepsilon)}_{j,n}\circ T\phi_{i,n}d\mu\bigg) \nonumber\\
    &= \int_X h d\mu.\label{eq:l3}
\end{align}
Above, \eqref{eq:l1} follows from linearity, \eqref{eq:l2} uses the fact that $\phi_{j,n}$ has unit integral, and \eqref{eq:l3} leverages the fact that $\{\psi_{j,n}^{(\varepsilon)}\}_{j=1}^n$ forms a partition of unity. The rest of the equalities follow from simple rearrangements. Thus, we have established that $\mathcal{T}^{(n,\varepsilon)}$ is Markov.

We now focus on proving the convergence. Let $n\in\mathbb{N}$ be fixed and assume $i,j\leq n$. Since $\psi_{j,n}^{(\varepsilon)} \xrightarrow[]{\varepsilon \to 0} \chi_{C_{j,n}}$ pointwise $\mu$-almost everywhere and $T$ is non-singular, we have that 
\begin{equation}\label{eq:construction1}
    g_{\varepsilon}(x):= \Big| (\psi_{j,n}^{(\varepsilon)}\circ T)(x) - (\chi_{C_{j,n}}\circ T)(x) \Big| \phi_{i,n}(x)
\end{equation}
satisfies $g_{\varepsilon}(x) \xrightarrow[]{\varepsilon \to 0} 0$ pointwise $\mu$-almost everywhere. Moreover, by the construction in \eqref{eq:construction1} we have $|g_{\varepsilon}(x)| \leq \phi_{i,n}(x)$ for all $\varepsilon > 0$ and all $x\in X$. Thus, it follows by Lebesgue's dominated convergence theorem that
\begin{align*}
    |M_{i,j}^{(n,\varepsilon)} - M_{i,j}^{(n)}| \leq \int_X \Big| \psi_{j,n}^{(\varepsilon)}\circ T - \chi_{C_{j,n}}\circ T \Big| \phi_{i,n}d\mu=\int_X g_{\varepsilon}d\mu \xrightarrow[]{\varepsilon \to 0} 0.
\end{align*}
Using this result, notice that for all $x\in X$ it holds that
$$\lim_{\varepsilon \to 0}(\mathcal{L}^{(n,\varepsilon)} \phi_{i,n})(x) = \lim_{\varepsilon \to 0 } \sum_{j=1}^n M_{i,j}^{(n,\varepsilon)}\phi_{j,n}(x) = \sum_{j=1}^n M_{i,j}^{(n)}\phi_{j,n}(x) = \mathcal{L}^{(n)} \phi_{i,n}(x).$$
Moreover, since $$|\mathcal{L}^{(n,\varepsilon)}\phi_{i,n}| \leq \sum_{j=1}^n M_{i,j}^{(n,\varepsilon)} \phi_{j,n} \leq \sum_{j=1}^n \phi_{j,n}\in L_{\mu}^1(X)$$ the dominated convergence theorem allows us to convert the pointwise convergence above into $L_{\mu}^1$ convergence. That is,
$$\lim_{\varepsilon \to 0}\|\mathcal{L}^{(n,\varepsilon)}\phi_{i,n} - \mathcal{L}^{(n)}\phi_{i,n}\|_{L_{\mu}^1} = 0.$$
The result then extends for arbitrary $h\in \Delta_n$. That is, if we write $h = \sum_{i=1}^na_i \phi_{i,n}$ we have by the triangle inequality that 
\begin{equation}\label{eq:eps_conv}
    \lim_{\varepsilon \to 0}\|\mathcal{L}^{(n,\varepsilon)}h - \mathcal{L}^{(n)}h\|_{L_{\mu}^1} \leq \lim_{\varepsilon \to 0} \sum_{i=1}^n |a_i| \|\mathcal{L}^{(n,\varepsilon)}\phi_{i,n} - \mathcal{L}^{(n)}\phi_{i,n}\|_{L_{\mu}^1} = 0.
\end{equation}
Finally, for any arbitrary $f\in L_{\mu}^1(X)$ we have that 
\begin{align*}
    \lim_{n\to \infty}\lim_{\varepsilon \to 0}\| \mathcal{T}^{(n,\varepsilon)} f - \mathcal{T} f\|_{L_{\mu}^1}&= \lim_{n\to \infty}\lim_{\varepsilon \to 0}\| \mathcal{L}^{(n,\varepsilon)}\mathcal{Q}^{(n)}f - \mathcal{T} f\|_{L_{\mu}^1} \nonumber \\
    &\leq \lim_{n\to \infty}\lim_{\varepsilon \to 0}\| \mathcal{L}^{(n,\varepsilon)}\mathcal{Q}^{(n)}f  -\mathcal{L}^{(n)}\mathcal{Q}^{(n)}f\|_{L_{\mu}^1}+ \lim_{n\to \infty}\lim_{\varepsilon \to 0}\| \mathcal{T}^{(n)}f - \mathcal{T} f\|_{L_{\mu}^1} \nonumber\\
    & = \lim_{n\to \infty} \|\mathcal{T}^{(n)}f - \mathcal{T}f\|_{L_{\mu}^1}= 0.
\end{align*}
Above, we have used the convergence in \eqref{eq:eps_conv} to move from the second line to the third line, as well as the convergence result from \Cref{thm:g1} to obtain the final equality. 
\end{proof}

\subsection{Optimal Partition Construction}\label{subsec:MC_analysis}
While in Section \ref{subsec:galerkin}, we established the convergence of a PFO approximation scheme in the infinite-data and infinite-resolution limit (see Theorem \ref{thm:convergence}), in practice we must always perform computations using a finite amount of data and a finite number of cells. In this section, we investigate how the construction of the partition $\{C_i\}_{i=1}^n$ impacts our approximation error. While the optimal choice of partition for Ulam's method has previously been studied in the literature in the infinite-data limit, see e.g.~\cite{murray2004optimal}, we focus  on characterizing optimality from the perspective of finite-sample variance reduction. We formalize our optimality principle as a minimax problem and we show that  constructing a partition which \textit{evenly distributes} the observed data across the cells satisfies this optimality principle (see Proposition \ref{prop:optimal_strat}). In practice, this can be approximately achieved using a $k$-means clustering of the observed dataset, and in certain situations a constrained $k$-means routine can build clusters which exactly satisfy this condition \cite{bradley2000constrained}.

For simplicity, we consider the case without regularization, i.e., we assume $\psi_j^{(\varepsilon)} = \chi_{C_j}$ for $1\leq j \leq n$. We assume access to $N$ i.i.d.~samples $\{x_k\}_{k=1}^{N}\sim \mu\in\mathcal{P}(X)$, as well as the pairings $\{(x_k,T(x_k))\}_{k=1}^N$ under some dynamical system $T:X\to X$. That is, we do not know the evolution rule $T$ a priori, but we instead observe $N$~i.i.d. samples describing its action on a fixed data-distribution. We will also assume throughout that $\text{mod}(N,n)  = 0$, i.e., it is possible to split the observed samples into $n$ subsets of size $N/n$. 

Let us now write $N_i:=|\{x_k\in C_i\}|$ to denote the number of samples in the $i$-th cell, which we assume to be positive. We also define $\{x_k^i\}_{k=1}^{N_i}$ as the subset of $\{x_k\}_{k=1}^N$ which is contained in the cell $C_i$.
Since the samples $\{x_k\}_{k=1}^N$ are i.i.d.~with respect to $\mu$, it also holds that the collection $\{x_k^i\}_{k=1}^{N_i}$ of samples which are contained in $C_i$ are i.i.d.~with respect to $\mu_i$; see Section \ref{subsec:galerkin}. In practice, one represents the PFO using the matrix $M\in\mathbb{R}^{n\times n}$ defined in~\eqref{eq:cond_int}, and with access to only finitely many samples from $\mu$, the entries of $M$ are approximated via Monte Carlo integration or ergodic averages; see \eqref{eq:empirical_markov_background}.
The approximation in \eqref{eq:empirical_markov_background} has known standard deviation
\begin{equation}\label{eq:err}
D_{i,j}:=\Big(\mathbb{E}[(M_{i,j}-\widehat{M}_{i,j})^2]\Big)^{1/2} = \sqrt{\frac{\text{Var}_{\mu_i}(\chi_{T^{-1}(C_j)})}{N_i}}, \qquad 1\leq i,j\leq n ;
\end{equation}
see \cite[Section 2]{Caflisch_1998}. We now state our principle of optimal variance reduction, which seeks a partition of $X$ minimizing the worst-case standard deviation.
\begin{variance1}
    Let $\mathcal{X}_n$ denote the set of all  partitions of $X$ into $n$ subsets. Then, $\{\hat{C}_{i}\}\in \mathcal{X}_n$ satisfies the principle of optimal variance reduction (V1) if 
    \begin{equation}\label{eq:variance_1}
        \{\hat{C}_i\} \in \argmin_{\{C_i\} \in \mathcal{X}_n } \max_{1\leq i,j \leq n} D_{i,j},
    \end{equation}
    where $D_{i,j}$ is the standard deviation defined in \eqref{eq:err} based on a partition $\{C_i\} \in \mathcal{X}_n$
\end{variance1}
Without prior knowledge of the underlying system $T$ or the measure $\mu$, which are both required to evaluate the numerator of \eqref{eq:err}, we cannot check if a given partition $\{C_i\}\in \mathcal{X}_n$ satisfies our principle of optimal variance reduction. Notice that the numerator of \eqref{eq:err} is given by 
$$\text{Var}_{\mu_i}(\chi_{T^{-1}(C_j)}) =  \int_X \chi_{T^{-1}(C_j)} d\mu_i  - \bigg(\int_X \chi_{T^{-1}(C_j)}d\mu_i\bigg)^2 = M_{i,j}-M_{i,j}^2.$$
Since $M_{i,j}-M_{i,j}^2\in [0,1/4]$, we instead consider a modification to our principle of optimal variance reduction which places a worst-case bound on the standard deviation in \eqref{eq:err} and can be verified when only finite-sample data is available.
\begin{variance2}
A partition $\{\hat{C}_i\}\in \mathcal{X}_n$ satisfies the principle of optimal variance reduction (V2) if 
\begin{equation}\label{eq:solve_arg}
 \{\hat{C}_i\}\in \argmin_{\{C_i\}\in \mathcal{X}_n} \max_{1\leq i \leq n}\frac{1}{\sqrt{N_i}}.   
\end{equation}
\end{variance2}
Note that the minimax problem introduced in \eqref{eq:solve_arg} depends only on the number of points contained in each cell. Thus, solving \eqref{eq:solve_arg} is equivalent to finding numbers $N_1,\dots,N_n\in\mathbb{N}$ which solve
\begin{equation}\label{eq:minimizesum}
   \argmin_{N_1,\dots,N_n\in\mathbb{N}} \max_{1\leq i \leq n}\frac{1}{\sqrt{N_i}} \qquad \text{such that}\qquad \sum_{i=1}^n N_i = N.
\end{equation}
By inspection, one can verify that the solution to \eqref{eq:minimizesum} is given by $N_i = N/n$ for each $1\leq i \leq n$. This leads us to Proposition \ref{prop:optimal_strat} which classifies the set of partitions satisfying the principle of optimal variance reduction (V2).
\begin{proposition}\label{prop:optimal_strat}
    A partition $\{\hat{C}_i\}\in \mathcal{X}_n$  solves \eqref{eq:solve_arg} if and only if $N_i = N/n$ for all $1\leq i \leq n$. 
\end{proposition}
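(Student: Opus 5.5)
The plan is to reduce the claim to the integer minimax problem \eqref{eq:minimizesum} and solve it by a pigeonhole argument. The objective in \eqref{eq:solve_arg} depends on the partition only through the counts $(N_1,\dots,N_n)$. These satisfy $N_i\geq 1$ (positivity is assumed) and $\sum_i N_i = N$, since the cells are disjoint and cover $X$, so each sample lies in exactly one cell.

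The key step is a lower bound valid for every partition. Since $t\mapsto 1/\sqrt{t}$ is strictly decreasing on $(0,\infty)$, we have
\[
\max_{1\leq i\leq n}\frac{1}{\sqrt{N_i}} = \frac{1}{\sqrt{\min_i N_i}} .
\]
Because $\sum_i N_i = N$, the minimum count is at most the average, $\min_i N_i \leq N/n$. Hence
\[
\max_{1\leq i\leq n}\frac{1}{\sqrt{N_i}} \geq \sqrt{n/N}
\]
for every partition in $\mathcal{X}_n$. Equality holds if and only if $\min_i N_i = N/n$. Together with $\sum_i N_i = N$, this forces $N_i = N/n$ for all $i$: if some $N_j > N/n$, the total would exceed $n\cdot N/n = N$.

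Next I show the bound is attained, so that the minimum value is exactly $\sqrt{n/N}$. Using $\text{mod}(N,n)=0$, I split the sample set $\{x_k\}$ into $n$ disjoint groups of size $N/n$. I then let $\hat C_i$ contain the $i$-th group and distribute the remaining points of $X$ arbitrarily, for instance by putting them all in $\hat C_1$. For this to work, the samples must be distinct points, which holds almost surely when $\mu$ has no atoms; I would state this as a tacit assumption. The resulting partition achieves $\sqrt{n/N}$.

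Both directions now follow. If $N_i = N/n$ for all $i$, the objective equals the lower bound, so the partition is a minimizer. Conversely, a minimizer must attain the value $\sqrt{n/N}$, which forces equal counts by the equality case above. The only delicate point is the existence step, namely that a partition of $X$ realizing the equal split exists. This is where the divisibility assumption and the (almost sure) distinctness of the samples are used. Everything else is elementary.
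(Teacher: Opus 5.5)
Your proof is correct and follows the same route as the paper. The paper also reduces \eqref{eq:solve_arg} to the count problem \eqref{eq:minimizesum} and asserts ``by inspection'' that its solution is $N_i = N/n$; your argument via $\min_i N_i \le N/n$, together with its equality case, makes that inspection explicit, and your realizability step (distinct samples, e.g.\ $\mu$ non-atomic) supplies a detail the paper leaves tacit when it calls the two problems equivalent, which is exactly what the ``only if'' direction needs.
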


Proposition \ref{prop:optimal_strat} indicates that we should build partitions which contain the same number of samples in each cell, in order to reduce the worst-case standard deviation when approximating the PFO via the~strategy given in \eqref{eq:empirical_markov_background}. Notably, if the data samples come from a long trajectory which samples the invariant measure, then this mesh-construction strategy will involve focusing the majority of cells on the support of the invariant measure. We therefore expect this scheme to be especially useful for approximating the PFO of high-dimensional systems admitting low-dimensional invariant measures. In Section \ref{sec:numerics}, we implement the selection principle suggested by Proposition \ref{prop:optimal_strat} via $k$-means clustering and demonstrate the improved efficiency of the data-adaptive unstructured mesh compared to a uniform mesh; see Figure \ref{fig:Cat}.

\section{Numerical Results}\label{sec:numerics}
\subsection{Overview}
In this section, we provide comprehensive numerical experiments benchmarking our data-adaptive approaches for learning dynamical systems from invariant measures and Markov matrices\footnote{Our code is publicly available: \url{https://github.com/Yinonghyn/Parameter-Identification-with-Data-Driven-Cells}}. We begin in Section~\ref{MESH} by comparing the uniform and adaptive meshes for approximating the invariant measure of a low-dimensional dynamical system. Our results demonstrate that the data-adaptive approach is superior in terms of both computational cost and accuracy when the invariant measure is concentrated on a smaller region of the state space; see Figure \ref{fig:Cat}. Section \ref{MAT} is then dedicated to benchmarking the performance of the matrix matching objective \eqref{eq:background_M_loss}, while Section \ref{IM} benchmarks the invariant measure-matching objective \eqref{eq:background_IM_loss}.  Finally, in \Cref{SST}, we apply the Markov matrix approach to a real-world sea surface temperature (SST) prediction problem.

\paragraph{The dynamical systems.} The experiments in Sections  \ref{IM} and \ref{MAT} consider the Lorenz-63 system given by
\begin{equation}
\begin{cases}
\dot{x} = \sigma (y - x), \\
\dot{y} = x(\rho - z) - y, \\
\dot{z} = xy - \beta z,
\end{cases}
\label{eq:lorenz63}
\end{equation}
where $\sigma = 10$, $\rho = 28$, and $\beta = 8/3$. These parameter values place the system in the chaotic regime. Our experiments also consider the Lorenz-96 model, which is a standard test problem for data assimilation and forecasting in high-dimensional chaotic systems. For state variables $x_1,\ldots,x_d$, the dynamics are
\begin{equation}
\frac{dx_i}{dt}
=
(x_{i+1} - x_{i-2})x_{i-1} - x_i + F,
\qquad i = 1,\ldots,d,
\label{eq:lorenz96}
\end{equation}
where the indices are interpreted cyclically modulo $d$, so that $x_{i+d}=x_i$ for all $i$.  

\paragraph{The unknown dynamics.} When considering the Markov matrix matching loss for the Lorenz-63 system we reconstruct the vector field $v:\mathbb{R}^3\to \mathbb{R}^3$ on the full state. For the invariant measure loss, which cannot uniquely reconstruct the full dynamics, we restrict to learning the first component $\dot{x}$ of the vector field, while treating $\dot{y}$ and $\dot{z}$ as known. When considering the Markov matrix matching objective for the Lorenz-96 system with $d = 5$ and $F = 8$, we treat all state variables as unknown and learn the vector field $v:\mathbb{R}^5\to\mathbb{R}^5$ on the full state. When $d = 30$, we only learn the first component $\dot{x}_1:\mathbb{R}^{30}\to\mathbb{R}$ of the vector field and assume that all other components $\dot{x}_2,\dots,\dot{x}_{30} $ are fixed and  known a priori. Finally, when considering the invariant measure matching objective for the Lorenz-96 system with $d = 5$, we assume that the forcing $F:\mathbb{R}^5 \to \mathbb{R}$ is an unknown function, while the remaining components of the vector field are all known. For the NOAA sea-surface temperature forecasting experiment presented in Section \ref{SST}, we attempt to learn the reduced-order dynamics using both the Markov matrix matching and invariant measure matching objectives.

\paragraph{Baseline comparisons.} The main approach we compare the training schemes \eqref{eq:background_M_loss} and \eqref{eq:background_IM_loss} with is a pointwise least-squares difference between the simulated and observed flow maps, i.e., the one-step rollout loss 
\begin{equation}\label{eq:least_square}
    \mathcal{J}(\theta) = \frac{1}{N}\sum_{k=0}^{N-1}|x_{k+1} - T_{\theta}(x_k)|^2,
\end{equation}
where $\{x_k\}_{k=0}^{N}$ is the observed data and $x_{k+1}= T^*(x_k)$ where $T^*$ is the ground truth flow map. The loss \eqref{eq:least_square} is common in shooting methods and for training neural ODEs \cite{DBLP:journals/corr/abs-1806-07366} when $v_{\theta}$ is parameterized as a neural network. We also compare our proposed frameworks with SINDy~\cite{brunton2016discovering}, which learns the dynamics by taking divided difference approximations of the observed trajectory and performing sparse regression onto a library of candidate terms.

\paragraph{Model training.} All approaches are trained using the same feedforward neural network architecture with hyperbolic tangent activation and the Adam optimizer with a learning rate of $10^{-3}$ \cite{kingma2014adam}. Unless otherwise stated, all models use three hidden layers with 100 nodes each. Moreover, all invariant measure matching experiments use the Euclidean distance between the discrete probability vectors, while the loss function used for Markov matrix matching is specified throughout.  To ensure fair comparison of the various training objectives, we terminate optimization after the objective function has been reduced below a fixed percentage of its initial value. This  controls for differences in optimization difficulty across the various objectives that arise due to the intrinsic biases of neural network parameterizations. All data is normalized before neural network training. 

\paragraph{Assessing model performance.} After training, we assess model performance using two complementary metrics. The first is a one-observation-step root mean squared error (RMSE) comparison between the unseen ground truth and learned flow maps, capturing how well the learned model can forecast short-term transient behavior. Similar to \eqref{eq:least_square}, when computing the RMSE metric, the model is initialized at multiple ground-truth states and is assessed based on its prediction of the next observation.  The second is the 2-Wasserstein distance between empirical measures generated through long trajectory simulations for the learned and observed dynamics.  This error metric captures the alignment between the model's asymptotic statistical behavior and the underlying invariant measure. While in many examples we train on noisy data, these two error metrics are always evaluated with respect to a clean, unseen testing set. When training data are sparsely sampled and noisy, we consistently find that our Markov matrix and invariant measure matching objectives offer improved robustness over both SINDy and the one-step rollout loss \eqref{eq:least_square}.

\subsection{Uniform and Data-Adaptive Mesh Comparison}\label{MESH}

We first compare uniform and data-adaptive unstructured meshes for approximating invariant densities. The purpose of this experiment is to isolate the effect of the state-space partition before using the resulting Markov matrices as objectives for learning dynamics. A uniform mesh distributes cells evenly over the entire domain, whereas a data-adaptive mesh concentrates cells in regions visited frequently by the observed trajectory. This distinction is important for systems whose invariant measure is strongly nonuniform.

We consider a two-dimensional discrete-time system on $[0,1]^2$. Let $S:[0,1]^2\to[0,1]^2$ be defined by
\begin{equation}\label{eq:cat}
    S = g\circ C \circ g^{-1},
    \qquad
    g(x,y)=(x^{1/10},y),
    \qquad
    C(x,y)=(2x+y,x+y)\pmod{1}.
\end{equation}
Here $C$ is the Arnold cat map, which preserves Lebesgue measure on $[0,1]^2$ and is a standard example of a chaotic area-preserving map~\cite{lasota2013chaos}. The nonlinear change of variables $g$ transforms this system into one with a nonuniform invariant density, denoted by $\rho^*$. This example therefore provides a simple test case in which an adaptive mesh should be advantageous: most samples concentrate in a smaller portion of the domain, while a uniform mesh continues to allocate cells to regions with little invariant mass.

\begin{figure}[h!]
    \centering
    \subfloat[Ground truth invariant density and density approximations obtained using a uniform mesh and a data-adaptive unstructured mesh with $400$ cells.]{
        \label{fig:CatA}
        \includegraphics[width=0.95\textwidth]{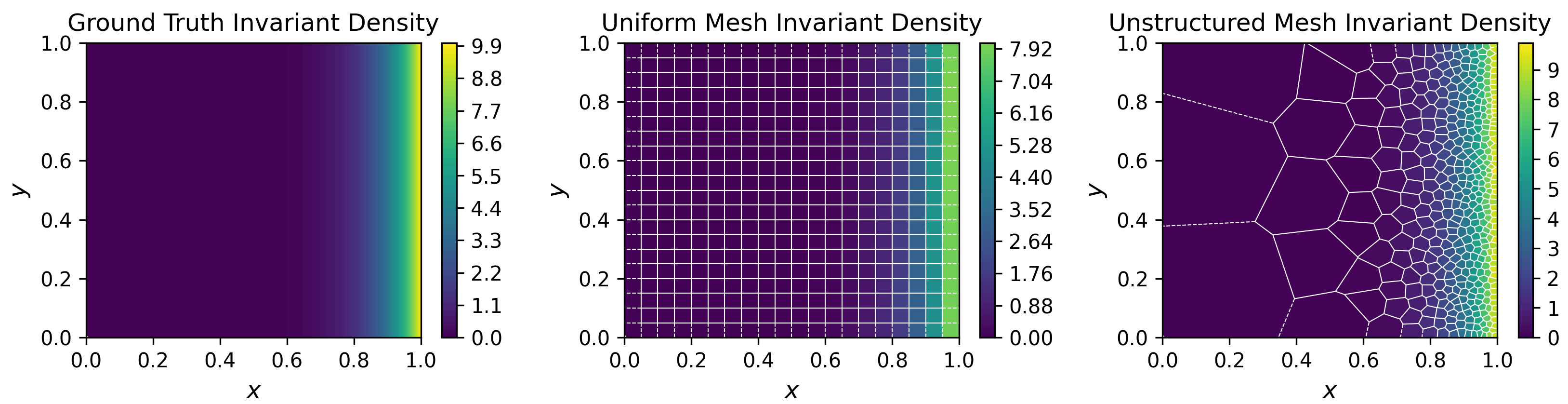}
    }
    \\
    \vspace{.5cm}
    \subfloat[Convergence comparison between uniform and data-adaptive meshes in the large-data regime.]{
        \label{fig:CatB}
        \includegraphics[width=.35\textwidth]{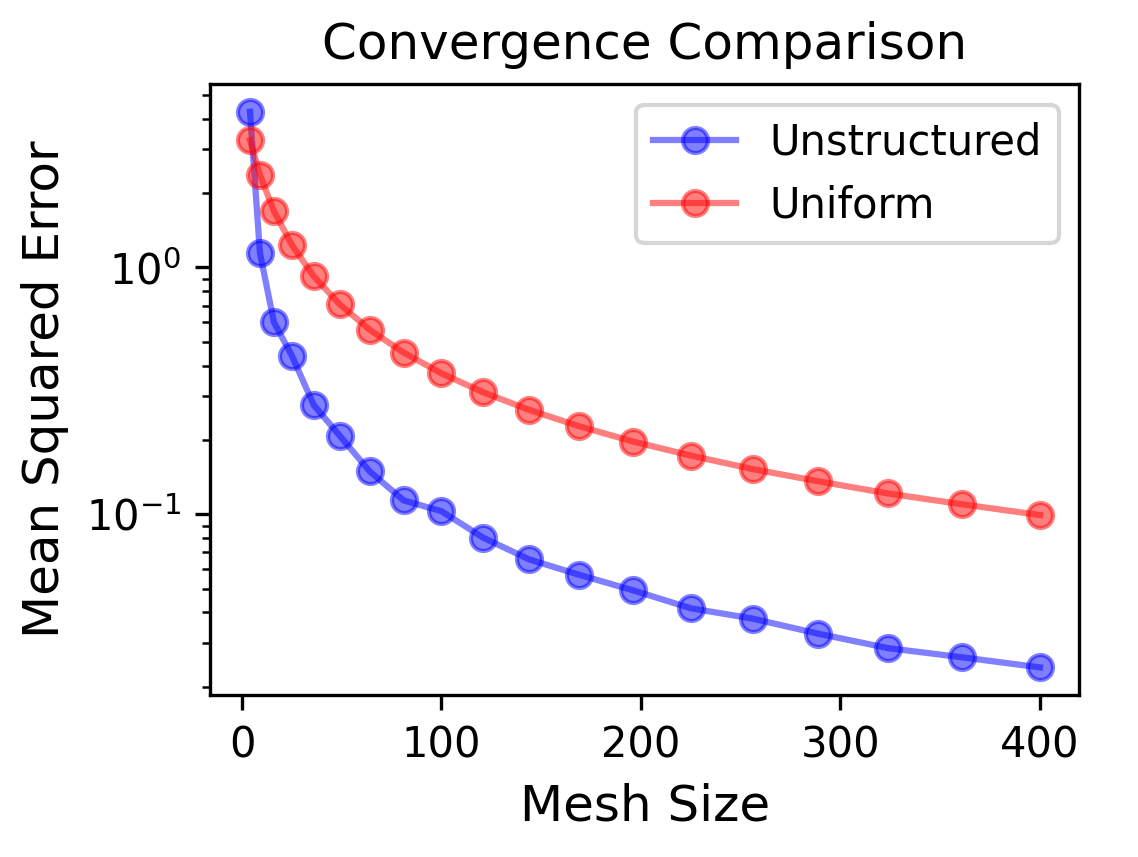}
    }
    \qquad
    \subfloat[Efficiency comparison between the two mesh constructions for a fixed number of cells.]{
        \label{fig:CatC}
        \includegraphics[width=.36\textwidth]{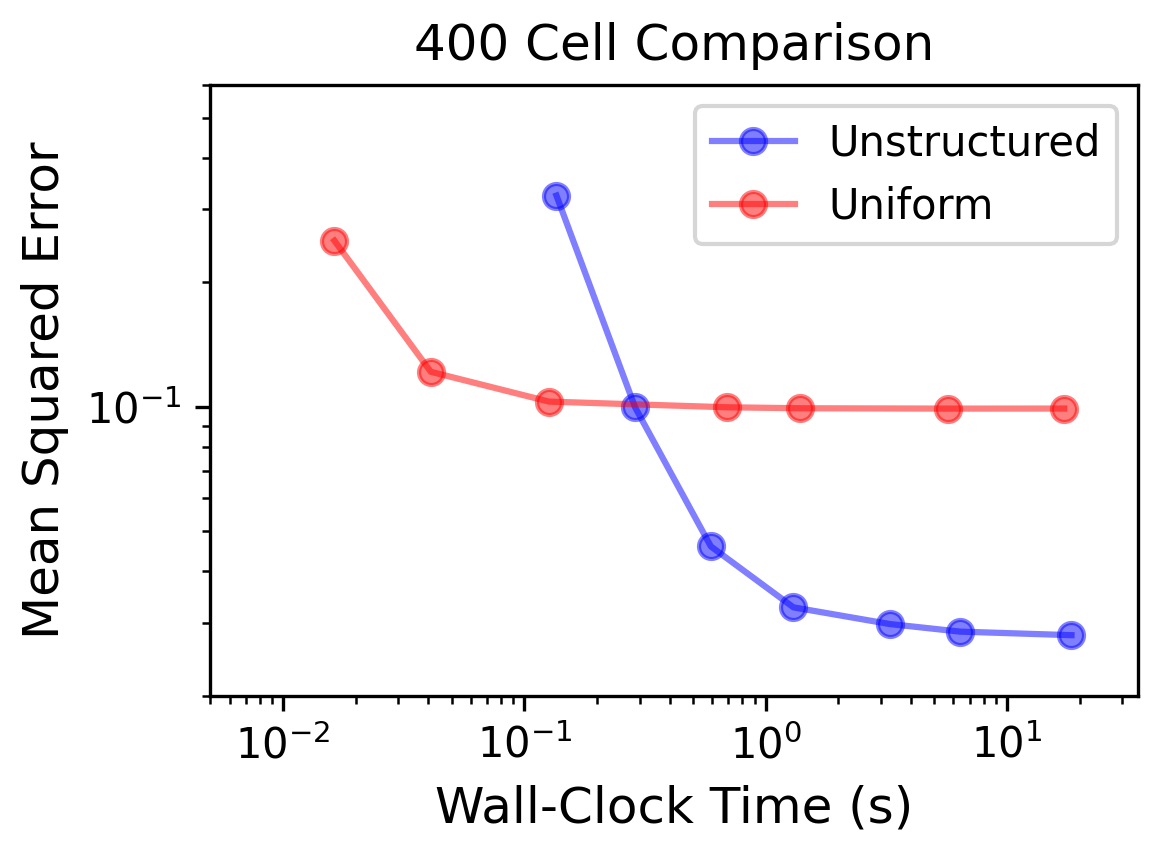}
    }
    \caption{Comparison between uniform and data-adaptive unstructured meshes for approximating the invariant density of \eqref{eq:cat}. In panels (a) and (b), we use $N=10^4$ initial conditions and $K=10^3$ iterations of the map. In panel (c), we fix the number of cells at $n=400$ and vary the number of iterations $K$ to compare accuracy and computational cost.}
    \label{fig:Cat}
\end{figure}

We generate data of the form $
    \big\{ \{S^\ell(x_k)\}_{k=1}^N \big\}_{\ell=1}^K,$
where the initial conditions $\{x_k\}_{k=1}^N$ are sampled independently from the uniform distribution on $[0,1]^2$. Here $N$ denotes the number of initial conditions and $K$ denotes the number of iterations applied to each initial condition. From these trajectory samples, we construct the empirical Markov matrix following \eqref{eq:empirical_markov_background} and compute the approximate invariant measure from its dominant eigenvector.

\Cref{fig:Cat} compares the uniform and data-adaptive mesh approximations. Since the invariant density is nonuniform, the samples quickly concentrate near the high-density region. The data-adaptive mesh therefore places smaller and more numerous cells in the region where the invariant measure has most of its mass, while the uniform mesh allocates the same resolution to all parts of the state space; see \Cref{fig:CatA}. For a fixed number of cells, this leads to a more accurate approximation of the invariant density; see \Cref{fig:CatB}. 

The main additional cost of the unstructured mesh is the need to determine cell membership for each sample. To assess this trade-off, \Cref{fig:CatC} compares the efficiency of the two approaches for a fixed number of cells, $n=400$, while varying the number of iterations $K$. The data-adaptive mesh reaches a lower-error approximation at comparable computational cost for the mesh sizes considered here. This experiment supports the use of data-adaptive unstructured meshes in the following reconstruction experiments, where the observed trajectories typically occupy only a small and highly nonuniform region of the ambient state space.
\subsection{Invariant Measure Matching}\label{IM}
In this section, we study the use of invariant measure matching as an objective function for learning the underlying dynamics. We apply the method to the Lorenz-63 and Lorenz-96 systems and compare its performance with a pointwise trajectory-matching approach.

The motivation for using invariant measures is that, for chaotic systems, pointwise trajectory errors can grow rapidly even when the learned dynamics are qualitatively correct. Thus, requiring a reconstructed trajectory to remain close to a reference trajectory at each time step can be unnecessarily restrictive. Instead, invariant measure matching compares the long-time statistical behavior of the learned system with that of the observed data. In particular, we use a finite-dimensional approximation of the invariant measure induced by a Voronoi partition of the state space. The resulting loss encourages the learned dynamics to reproduce the correct statistical distribution of states, rather than only matching short-time pointwise predictions.

\subsubsection{Clean Data}

We first consider the noise-free setting. In \Cref{fig:CleanComparisonL63IM}, we reconstruct Lorenz-63 and Lorenz-96 dynamics from clean trajectory data and compare the resulting trajectories with the ground truth. For the Lorenz-63 system we learn  the $\dot{x}$ component of the velocity, while in the Lorenz-96 case we learn the spatially dependent forcing $F$. This experiment provides a visual assessment of whether the invariant measure loss can reconstruct the qualitative structure of the underlying attractor in the absence of observation noise.

For each system, we train the model using a trajectory of length $N = 10^4$. We use $ n = 10^2$ Voronoi cells to approximate the invariant measure for Lorenz-63 and $ n  = 5\cdot 10^2$ for the Lorenz-96 system. We use the partition of unity weights based on \eqref{eq:smooth_weights2} and set the weighting parameter as $\varepsilon = 0.5$. After training for $5\cdot 10^3$ iterations, we generate reconstructed trajectories for the learned Lorenz-63 and Lorenz-96 systems and compare them with trajectories from the ground truth dynamics.

The reconstructed trajectories capture the main geometric features of the corresponding attractors. For Lorenz-63, the learned dynamics reproduce the characteristic two-lobed structure of the attractor. For Lorenz-96, the reconstructed trajectory occupies a region of state space consistent with the ground truth trajectory in the projected coordinates. These results suggest that, in the clean-data regime, invariant measure matching provides an effective objective for reconstructing the long-time statistical behavior of chaotic dynamical systems.

\begin{figure} 
    \centering
    \begin{subfigure}[t]{0.45\textwidth}
        \centering
        \includegraphics[width=\textwidth]{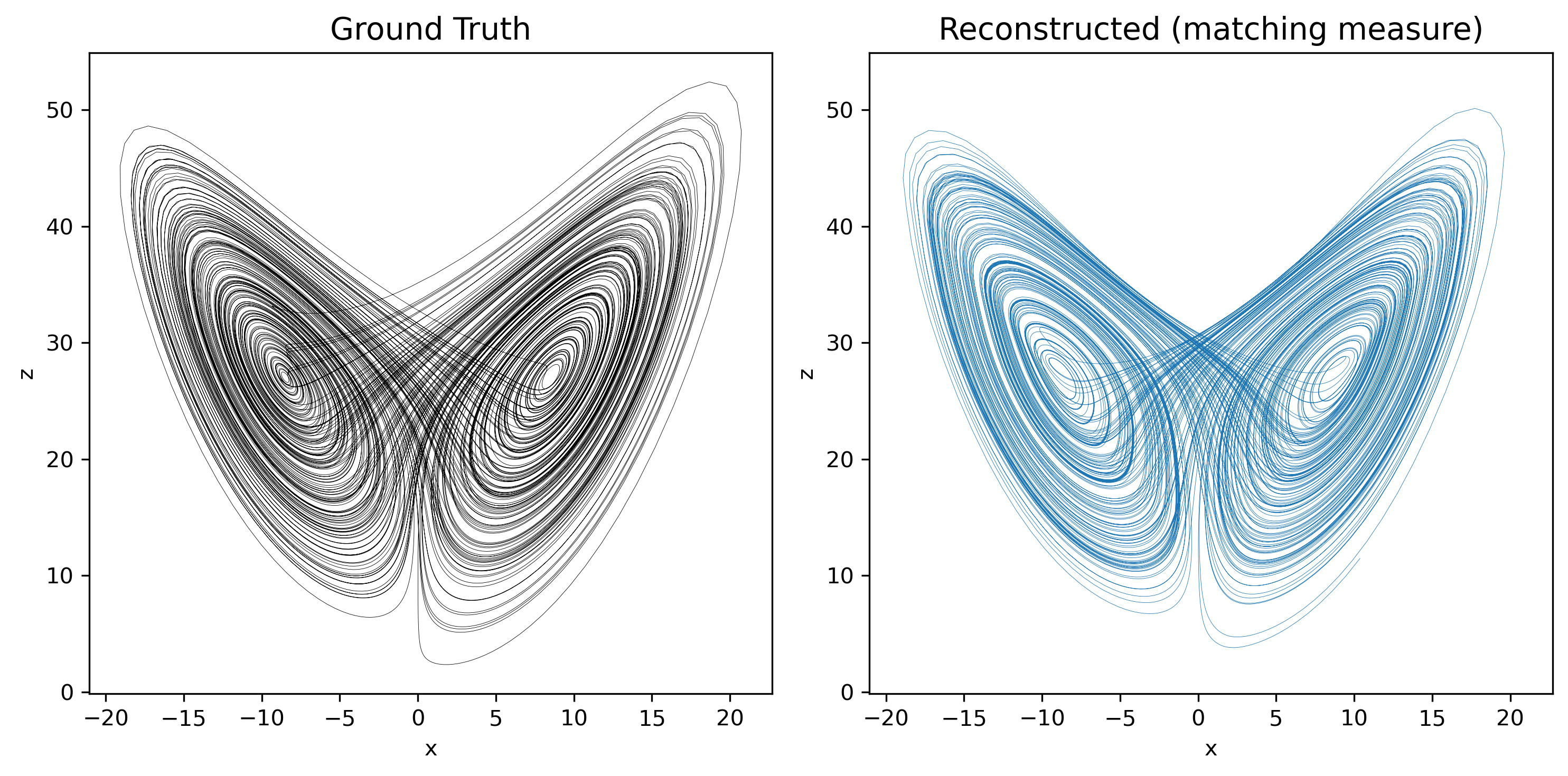}
        \caption{Lorenz-63 invariant measure learning}
        \label{fig:L63IM}
    \end{subfigure}
    \begin{subfigure}[t]{0.45\textwidth}
        \centering
        \includegraphics[width=\textwidth]{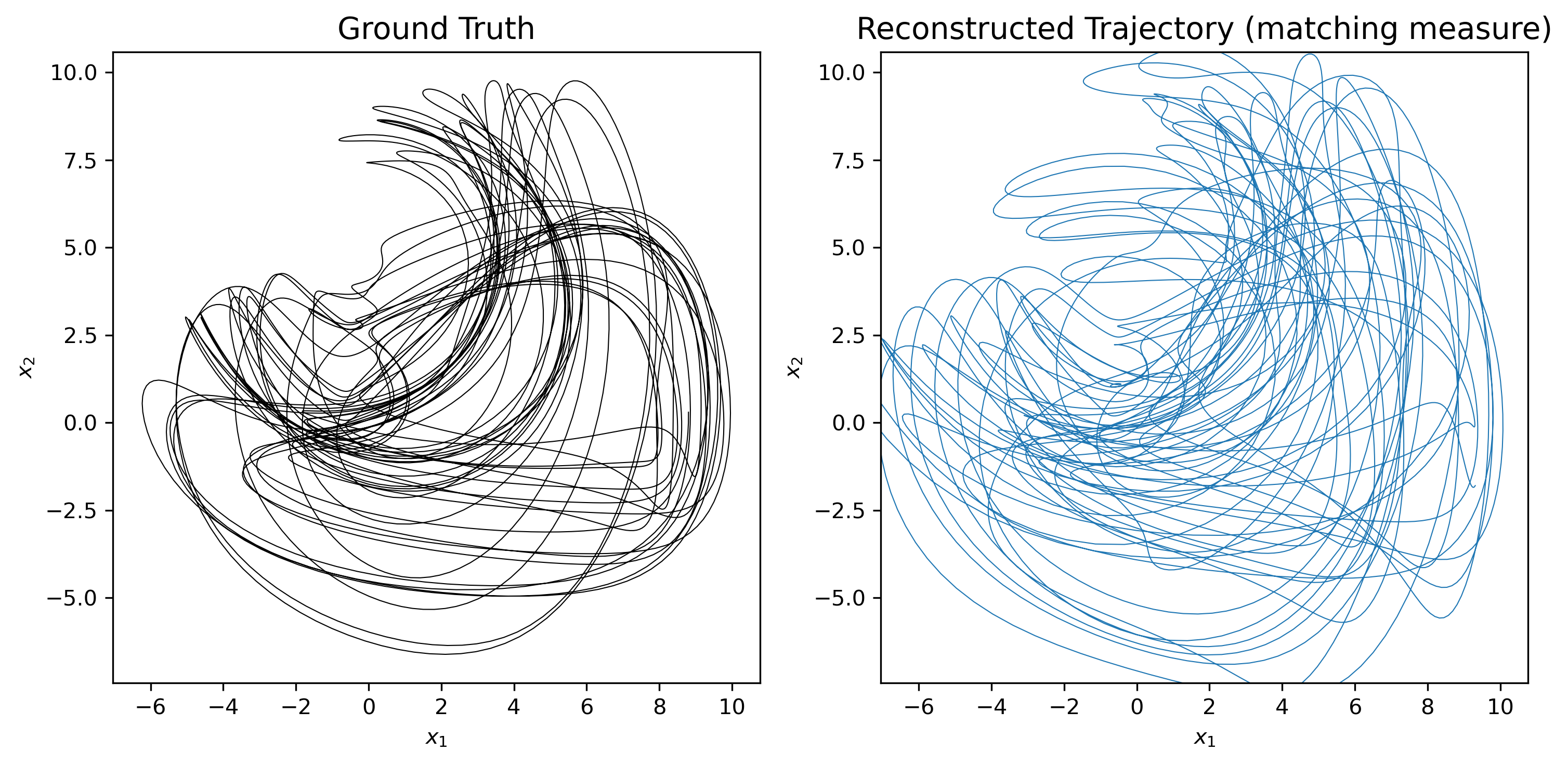}
        \caption{Lorenz-96 invariant measure learning}
        \label{fig:CleanL96IM}
    \end{subfigure}
    \caption{Two-dimensional projections of a ground truth trajectory (black) and a trajectory reconstructed by our invariant measure matching method (blue). The dynamical systems in panels (a) and (b) are Lorenz-63 and Lorenz-96, respectively.}
    \label{fig:CleanComparisonL63IM}
\end{figure}

\subsubsection{Noisy Data}

We now compare the performance of invariant measure matching and pointwise matching for learning the $\dot{x}$ component of the Lorenz-63 vector field under sparsely sampled and noisy conditions. We consider $N = 500$ noisy input-output pairs of the ground truth time-$0.05$ flow map of the Lorenz-63 system which were randomly sampled from a long trajectory. The observations are corrupted by additive Gaussian noise drawn from $\mathcal{N}(0,\sigma^2)$ with noise level $\sigma = 0.25$.
We construct $ n = 20$ Voronoi cells, and set the weighting parameter $\varepsilon = 10$ following~\eqref{eq:smooth_weights2}. We then use the forward Euler method with $dt = 0.01$ to approximate the underlying time-$0.05$ flow map. 

Training is conducted until the loss is reduced to $5\%$ of its initial value. This requires $2.5\cdot 10^3$ iterations for invariant measure matching and $2.6\cdot 10^4$ iterations for pointwise matching~\eqref{eq:least_square}. This stopping criterion ensures that both training procedures reach a comparable level of convergence. The results from one representative experiment are shown in \Cref{fig:ComparisonL63IMrand}. Our method successfully reconstructs the qualitative dynamics, while the pointwise method is strongly affected by the data-sparsity and noise.

\begin{figure}[h!] 
    \centering
    \includegraphics[width=\textwidth]{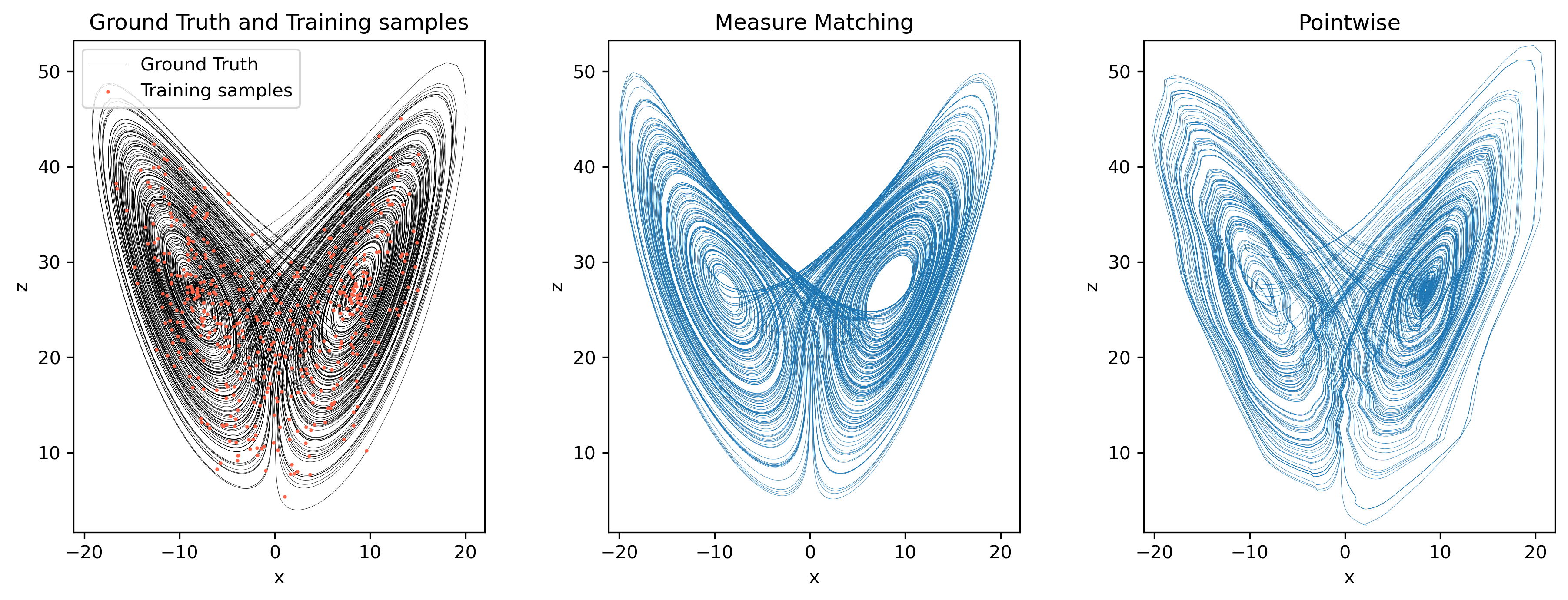}
    \caption{Two-dimensional projections of a ground truth trajectory and noisy training samples (left) and trajectories reconstructed using the proposed measure-matching method (middle) and the pointwise method (right), respectively.}
    \label{fig:ComparisonL63IMrand}
\end{figure}

We observe similar behavior for learning the spatially dependent forcing $F$ in the Lorenz-96 system defined in~\eqref{eq:lorenz96}. In this case, we consider $ N = 5\cdot 10^3$ input-output pairs of the time-$0.05$ flow map which were corrupted with a noise level of $\sigma = 0.2$. We use $n = 2\cdot 10^2$ Voronoi cells and set the weighting parameter $\varepsilon = 20$. The neural networks are trained until the loss is reduced to $25\%$ of its initial value, which requires $5\cdot 10^2$ iterations for invariant measure matching and $1.75\cdot 10^4$ iterations for pointwise matching. As shown in \Cref{fig:ComparisonL96IMrand}, invariant measure matching is more robust to noise, whereas the trajectory reconstructed by the pointwise method collapses onto a spurious periodic orbit.

\begin{figure}[h!]
    \centering
    \includegraphics[width=\textwidth]{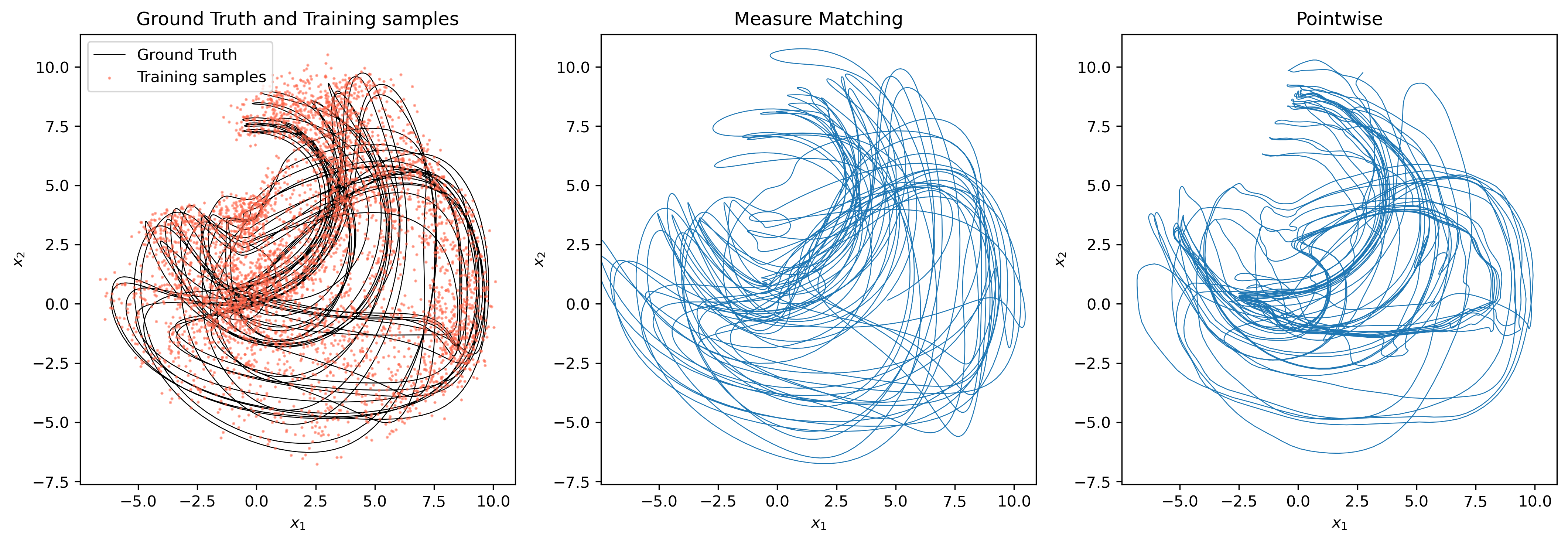}
    \caption{Two-dimensional projections of a ground truth trajectory and noisy training samples (left) and trajectories reconstructed using invariant measure matching and pointwise matching, respectively.} %
    
    \label{fig:ComparisonL96IMrand}
\end{figure}
In \Cref{tab:IM}, we repeat these experiments with different randomized neural network initializations and compare the RMSE one-step reconstruction error and the Wasserstein-2 reconstruction error for invariant measure matching and pointwise matching. For the Lorenz-63 and Lorenz-96 systems, invariant measure matching achieves smaller errors compared to pointwise matching for both the RMSE and Wasserstein-2 performance metrics. In several noisy experiments, the trajectory reconstructed by the pointwise method becomes trapped in a spurious orbit, leading to large variance in the Wasserstein-2 error.

\begin{table} 
    \centering
    \begin{tabular}{|c|c|c|c|c|}
        \hline
        System & RMSE (measure) & RMSE (pointwise)& $W_2$ (measure)& $W_2$ (pointwise)\\
        \hline

        Lorenz-63  & 0.41$\pm$0.04 & 0.74$\pm$0.07 & 9.59$\pm$0.22& 12.29$\pm$0.86 \\
        Lorenz-96  & 0.04$\pm$0.00 & 0.09$\pm$0.03 & 2.49$\pm$0.26 & 3.02$\pm$0.49\\
        \hline
    \end{tabular}
    \caption{The sample mean and standard deviation of the RMSE and $W_2$ errors for the reconstructions of Lorenz-63 and Lorenz-96 systems in Figure~\ref{fig:ComparisonL63IMrand} and Figure~\ref{fig:ComparisonL96IMrand}. The error statistics are computed across 10 different randomized realizations of neural network training. 
    }
    \label{tab:IM}
\end{table}

\subsection{Markov Matrix Matching}\label{MAT}

Unlike invariant measure matching, which compares only the long-time distribution over cells, Markov matrix matching compares transition probabilities between cells. It therefore captures not only where trajectories spend time, but also how they move through state space. 

\subsubsection{Clean Data}
We first evaluate the method in the noise-free setting. The goal is to determine whether matching cell-to-cell transition statistics is sufficient to reconstruct the qualitative dynamics of the underlying system. We consider three clean-data experiments. The first two use the same low-dimensional test problems as in the invariant measure experiments: Lorenz-63 and Lorenz-96 with $d=5$. These experiments test whether Markov matrix matching can reconstruct the qualitative dynamics in moderate dimension. The third experiment applies the method to a higher-dimensional Lorenz-96 system with $d=30$, demonstrating that the unstructured mesh construction remains feasible beyond the low-dimensional regime.

For the Lorenz-63 and $d=5$ Lorenz-96 experiments shown in \Cref{fig:CleanComparisonMAT}, we train the model using trajectories of length  $N = 10^4$. For Lorenz-63, we construct $n = 10^2$ cells and set the weighting parameter to $\varepsilon = 0.5$ following \eqref{eq:smooth_weights2}. For Lorenz-96 with $d=5$, we construct $n = 2\cdot 10^2$ cells and again set $\varepsilon = 0.5$. The Lorenz-63 experiment uses the 2-Wasserstein distance over the full transition matrices to optimize the parameters of the neural network, while the Lorenz-96 experiment uses the reduced row-wise Wasserstein distance \eqref{eq:background_M_loss2}. In both cases, we train a neural network to learn the velocity for $5\cdot 10^3$ iterations. After training, we reconstruct the corresponding three- and five-dimensional trajectories and visualize them through two-dimensional projections.

\begin{figure} 
    \centering
    \begin{subfigure}[t]{0.45\textwidth}
        \centering
        \includegraphics[width=\textwidth]{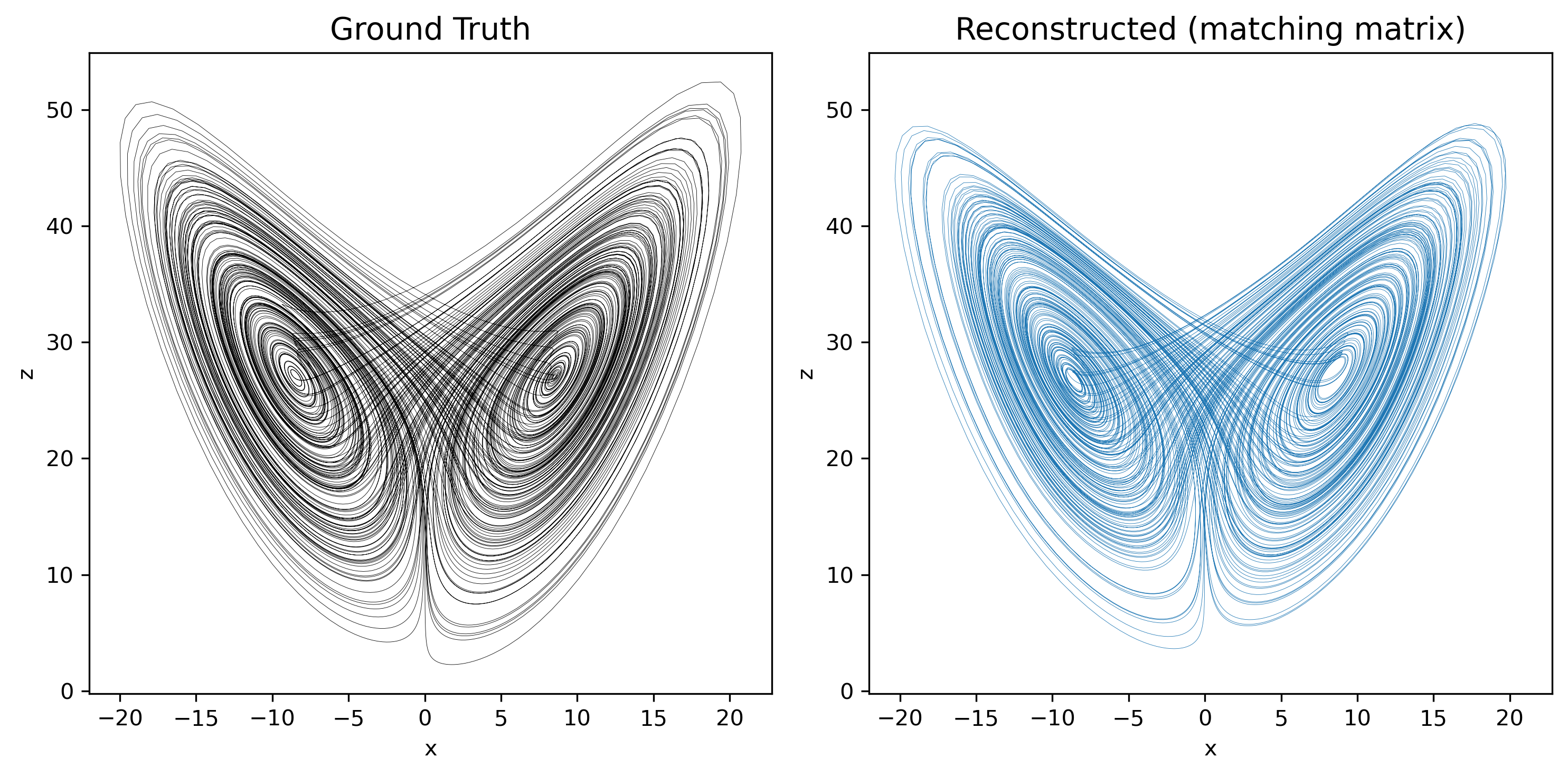}
        \caption{Lorenz-63 Markov matrix learning}
        \label{fig:CleanL63MAT}
    \end{subfigure}
    \begin{subfigure}[t]{0.45\textwidth}
        \centering
        \includegraphics[width=\textwidth]{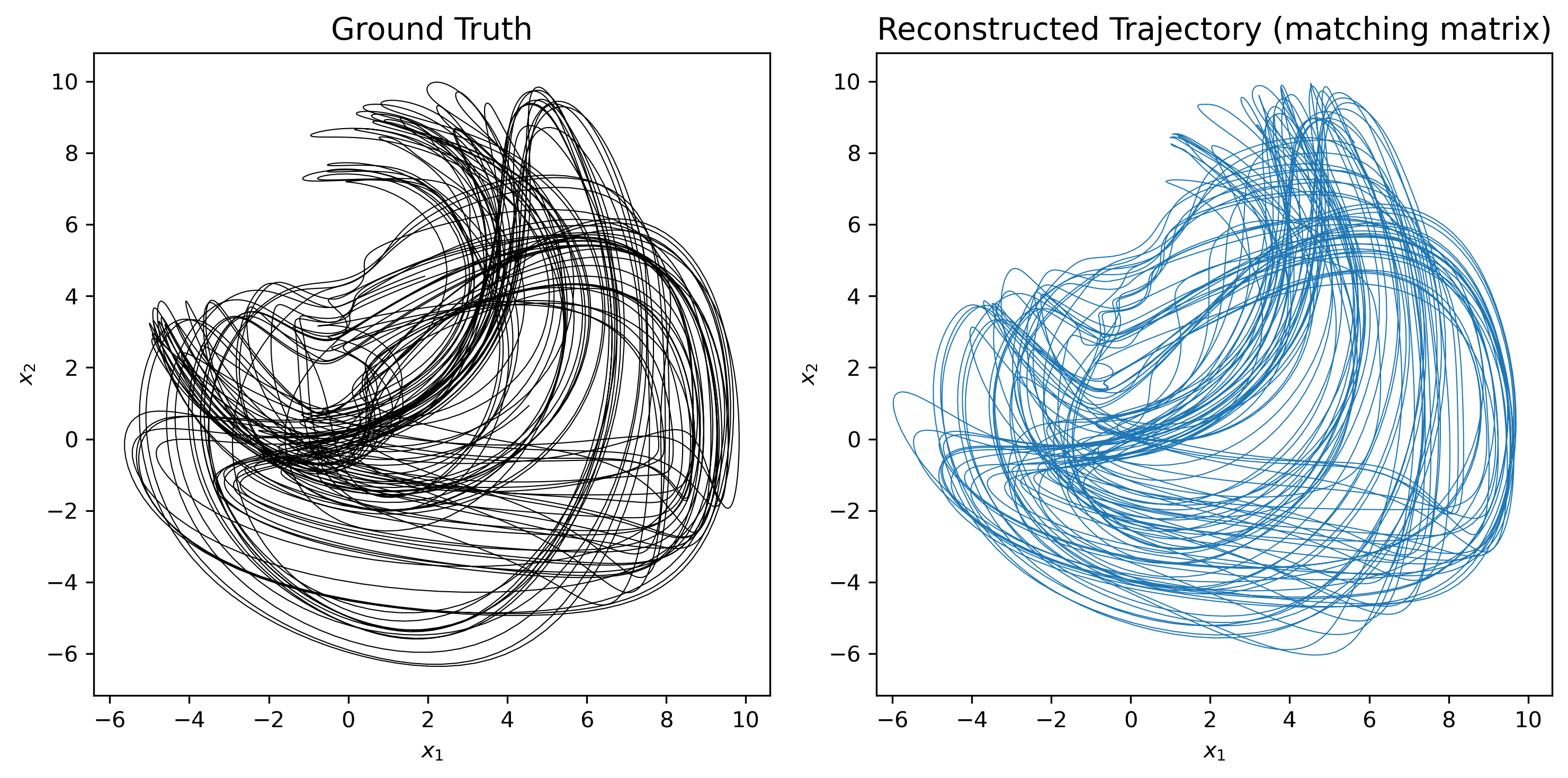}
        \caption{Lorenz-96 Markov matrix learning}
        \label{fig:CleanL96MAT}
    \end{subfigure}
    \caption{Two-dimensional projections of a ground truth trajectory (black) and a trajectory reconstructed by our Markov matrix matching method (blue). The dynamical systems in panels (a) and (b) are Lorenz-63 and Lorenz-96 with $d=5$, respectively.}
    \label{fig:CleanComparisonMAT}
\end{figure}

The results in \Cref{fig:CleanComparisonMAT} show that Markov matrix matching accurately reconstructs the qualitative structure of both systems. For Lorenz-63, the reconstructed trajectory reproduces the characteristic two-lobed attractor. For Lorenz-96 with $d=5$, the reconstructed trajectory remains consistent with the ground truth dynamics in the projected coordinates. These results indicate that matching transition statistics between cells is sufficient to reconstruct the main geometric features of the dynamics in the clean-data setting.

We next test the scalability of the method on a higher-dimensional Lorenz-96 system. Specifically, we consider \eqref{eq:lorenz96} with $d=30$ and seek to reconstruct the first component of the velocity field using a neural network parameterization. For this example, we construct weighting functions following~\eqref{eq:smooth_weights} with $\varepsilon = 5$ and use $n=2\cdot 10^2$ unstructured cells to construct the Markov transition matrices. Moreover, we use the Frobenius distance to compare Markov transition matrices during optimization. The results in \Cref{fig:L96} show close agreement between the reconstructed and ground truth systems after training. The long-time projected trajectory produced by the learned model follows the same qualitative structure as the true trajectory, while the short-time visualization of all $30$ state variables shows that the reconstructed dynamics reproduce the spatiotemporal behavior of the Lorenz-96 system. This experiment demonstrates that the proposed Markov matrix matching framework, combined with an unstructured data-adaptive mesh, can be effectively applied to high-dimensional dynamical systems.

\begin{figure}[h!] 
    \centering
    \subfloat[Three-dimensional projections of a ground truth trajectory (left) and a trajectory from our reconstructed model (right).]{
        \includegraphics[width=0.8\linewidth]{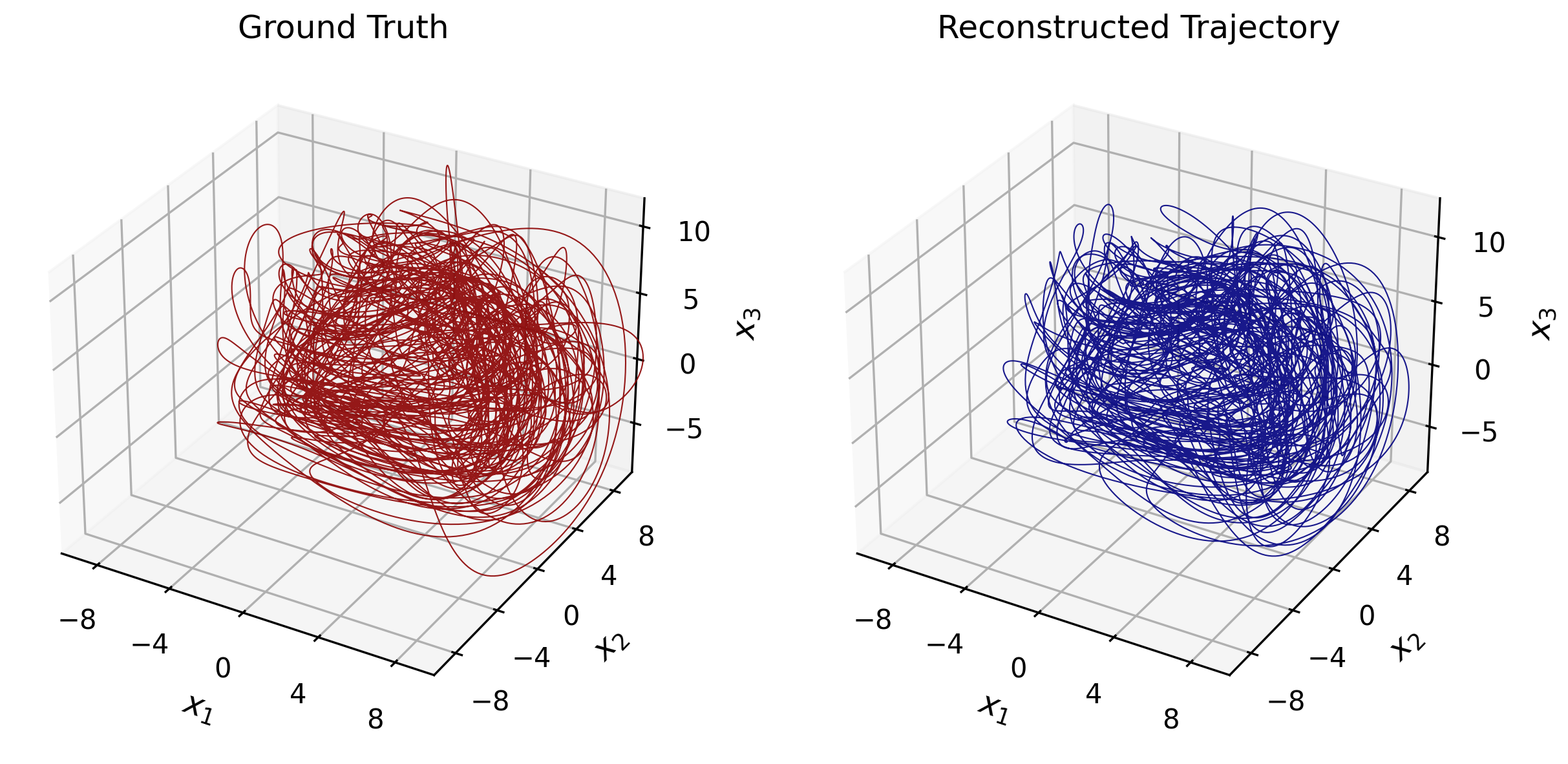}
        \label{fig:L96a}
    }
    \\
    \vspace{.2cm}
    \subfloat[Visualizations of short trajectories from the ground truth system (left) and our reconstructed model (right). Each row corresponds to one state variable, and color indicates its value at a given time.]{
        \includegraphics[width=0.8\linewidth]{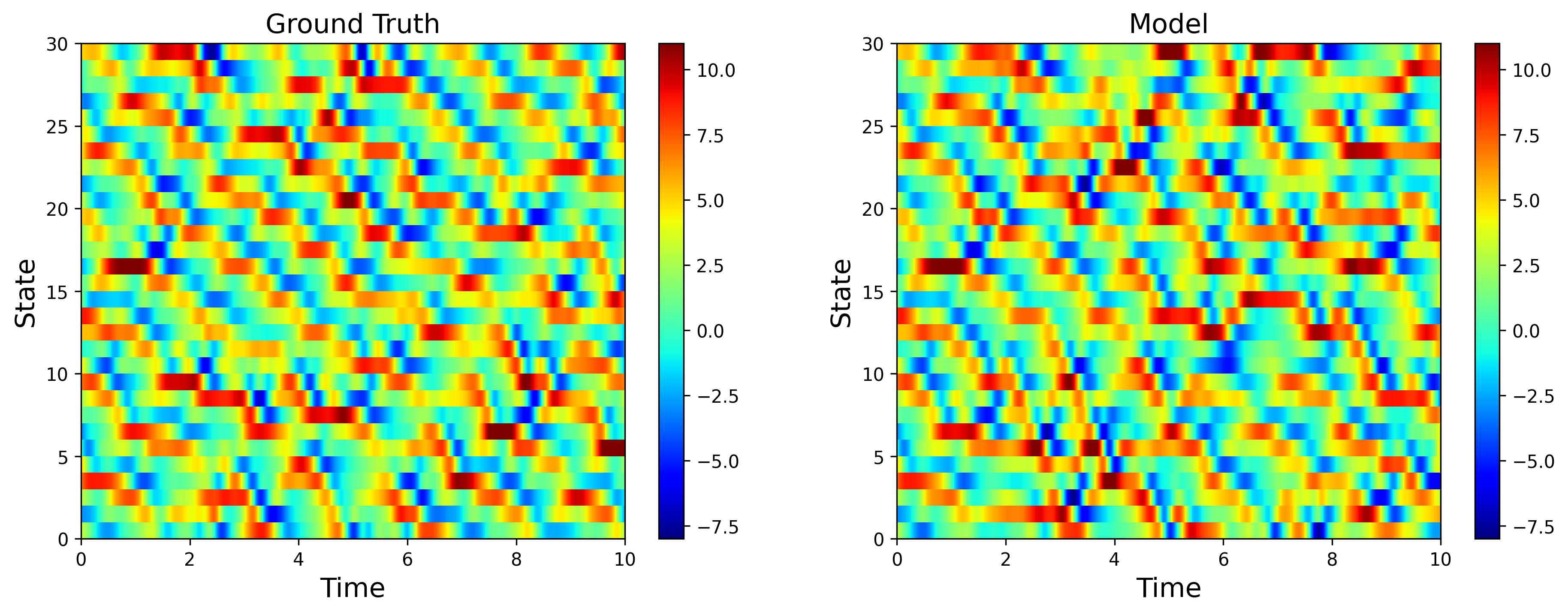}
        \label{fig:L96b}
    }
    \caption{Reconstruction of the first velocity component $\dot{x}_1=v_\theta(x)$ for the $30$-dimensional Lorenz-96 system. Panel (a) compares projected long trajectories from the ground truth and reconstructed systems. Panel (b) compares the time evolution of all $30$ state variables over a short trajectory.}
    \label{fig:L96}
\end{figure}

\subsubsection{Noisy Data}

We next evaluate Markov matrix matching in the presence of additive white noise. We consider two sampling regimes. First, we train on a single trajectory segment corrupted by noise. Second, we consider a sparse collection of noisy input-output pairs randomly sampled from a long trajectory and separated by a longer temporal window. These experiments test whether the transition-based objective remains stable when pointwise trajectory information is corrupted by noise. 

We begin by considering a trajectory of the Lorenz-63 system of length  $N = 5\cdot 10^3$ corrupted by Gaussian noise with level $\sigma = 0.3.$ We construct $n = 50$ Voronoi cells and set the weighting parameter as  $\varepsilon = 1$ following \eqref{eq:smooth_weights2}. We compare Markov matrix matching with pointwise matching \eqref{eq:least_square} and SINDy in order to assess long-time reconstruction quality in a noisy environment. The neural networks are trained until the loss is reduced to $15\%$ of its initial value. For this experiment, we use the Wasserstein distance to compare transition matrices as the objective in Markov matrix matching. This takes $10^3$ iterations for matrix matching and $4.9\cdot 10^4$ iterations for pointwise matching. We also fit a model using SINDy with a polynomial basis of degree up to 6. As shown in \Cref{fig:ComparisonL63}, matrix matching remains robust to white noise, while the trajectory reconstructed by the pointwise method becomes trapped in a spurious periodic orbit and the trajectory reconstructed by SINDy diverges rapidly. 

\begin{figure}[h!]
    \centering
    \includegraphics[width=\textwidth]{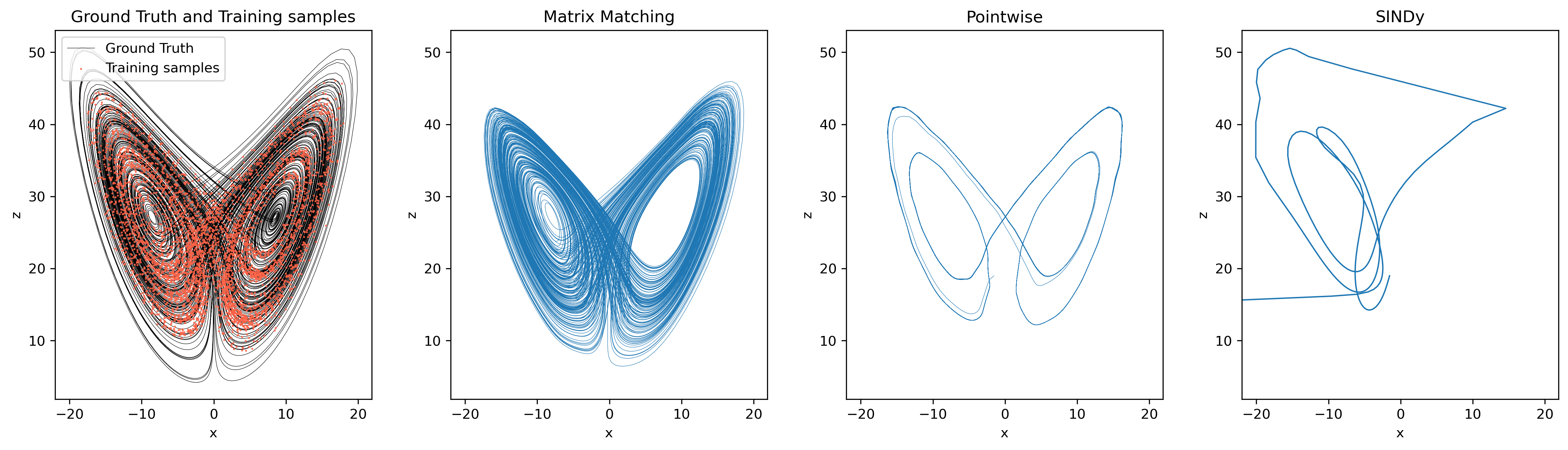}
    \caption{Two-dimensional projections, from left to right, of the ground truth trajectory with noisy training samples, the trajectory reconstructed by our Markov matrix matching method, the trajectory reconstructed by the pointwise method, and the trajectory reconstructed by SINDy.}
    \label{fig:ComparisonL63}
\end{figure}

We observe similar behavior for the Lorenz-96 system with $d=5$ and noise level $\sigma = 0.05$. In this experiment, we consider a trajectory of length $N = 5\cdot 10^3$, we construct $n = 2\cdot 10^2$ cells, and we set $\varepsilon = 10$ following \eqref{eq:smooth_weights2}. We consider matrix matching with the reduced row-wise Wasserstein loss \eqref{eq:background_M_loss2}. The neural networks are trained until the loss is reduced to $5\%$ of its initial value. This requires $5\cdot 10^2$ iterations for our method and $6.05\cdot 10^4$ iterations for the pointwise method. We also fit a model using SINDy with a polynomial basis of degree up to 5. As shown in \Cref{fig:ComparisonL96}, matrix matching is robust to noise, while the trajectory reconstructed by the pointwise method quickly becomes trapped in a spurious periodic orbit. The trajectory reconstructed by SINDy again diverges.

\begin{figure}[h!]
    \centering
    \includegraphics[width=\textwidth]{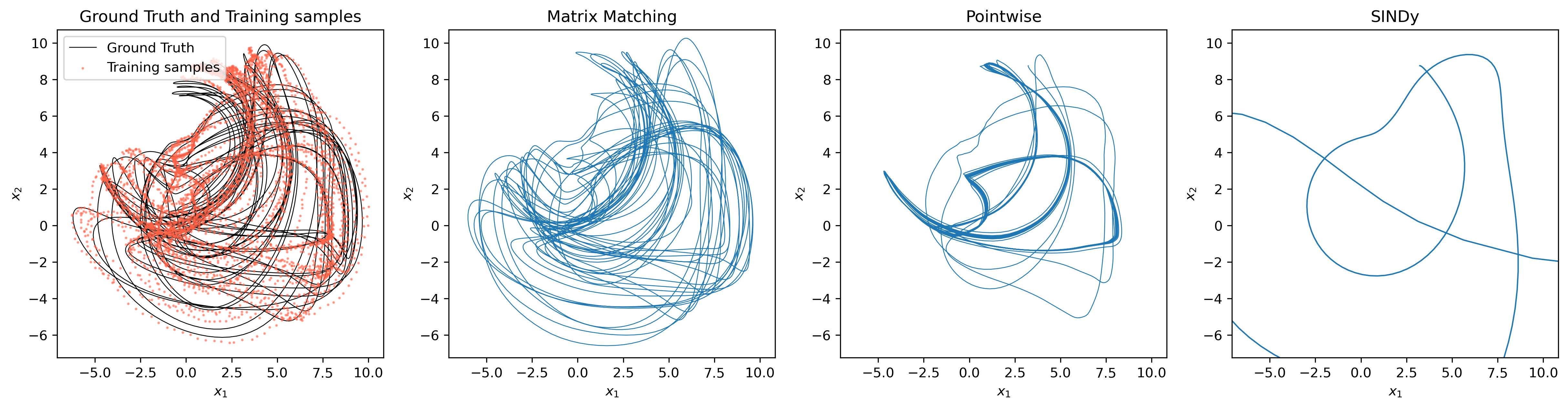}
    \caption{Two-dimensional projections of the ground truth trajectory and noisy training samples (left), together with trajectories reconstructed using our Markov matrix matching method, the pointwise method, and SINDy.}
    \label{fig:ComparisonL96}
\end{figure}

We next consider a sparser dataset from the Lorenz-63 system to further test the methods. In particular, we observe $N = 5\cdot 10^2$ input-output pairs of the time-$0.05$ flow map randomly sampled from a long trajectory and corrupted with noise level $\sigma = 0.5$. We construct $n = 20$ cells, and set $\varepsilon = 2$. For matrix matching we consider the Wasserstein distance for comparing the full transition matrices. The neural networks are trained until the loss is reduced to $2\%$ of its initial value, which takes $3.5\cdot 10^3$ iterations for matrix matching and $9.5\cdot 10^3$ iterations for pointwise matching. \Cref{fig:ComparisonL63MATrand} shows the result from one representative experiment. Matrix matching reconstructs the qualitative structure of the dynamics, whereas the dynamics arising from the pointwise matching rapidly collapse into a small region of state space.

\begin{figure}[h!]
    \centering
    \includegraphics[width=\textwidth]{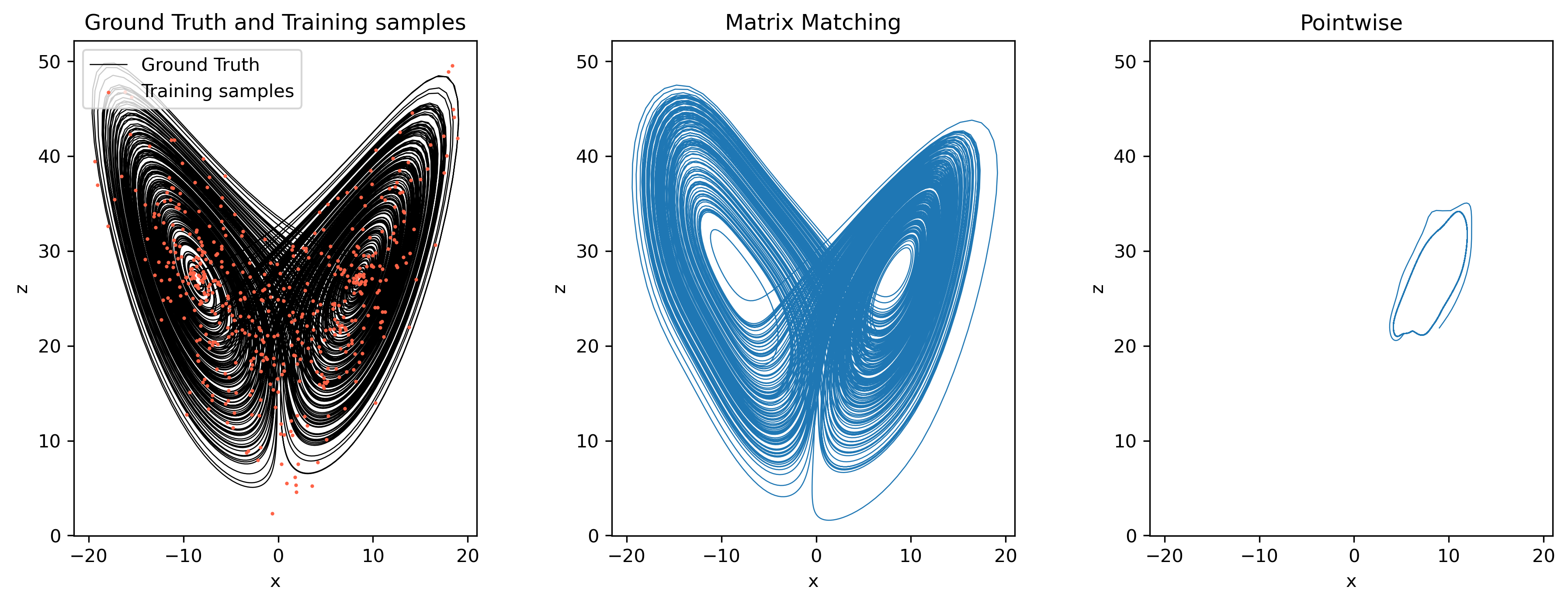}
    \caption{Two-dimensional projections of the ground truth trajectory and noisy training samples (left), together with trajectories reconstructed using our Markov matrix matching method and the pointwise method.}
    \label{fig:ComparisonL63MATrand}
\end{figure}

For the Lorenz-96 system with $d = 5$, we train on $N = 5\cdot 10^3$ input-output pairs of the time-$0.05$ flow map randomly sampled from a long trajectory and corrupted with noise level $\sigma = 0.2$. We construct $n = 10^2$ cells and set $\varepsilon = 10$ following \eqref{eq:smooth_weights2}. The neural networks are trained until the loss is reduced to $2\%$ of its initial value, requiring $10^3$ iterations for matrix matching and $2\cdot 10^3$ iterations for pointwise matching. As shown in \Cref{fig:ComparisonL96MATrand}, matrix matching successfully reconstructs the dynamics, while the pointwise reconstruction is visibly distorted by noise.

\begin{figure} 
    \centering
    \includegraphics[width=\textwidth]{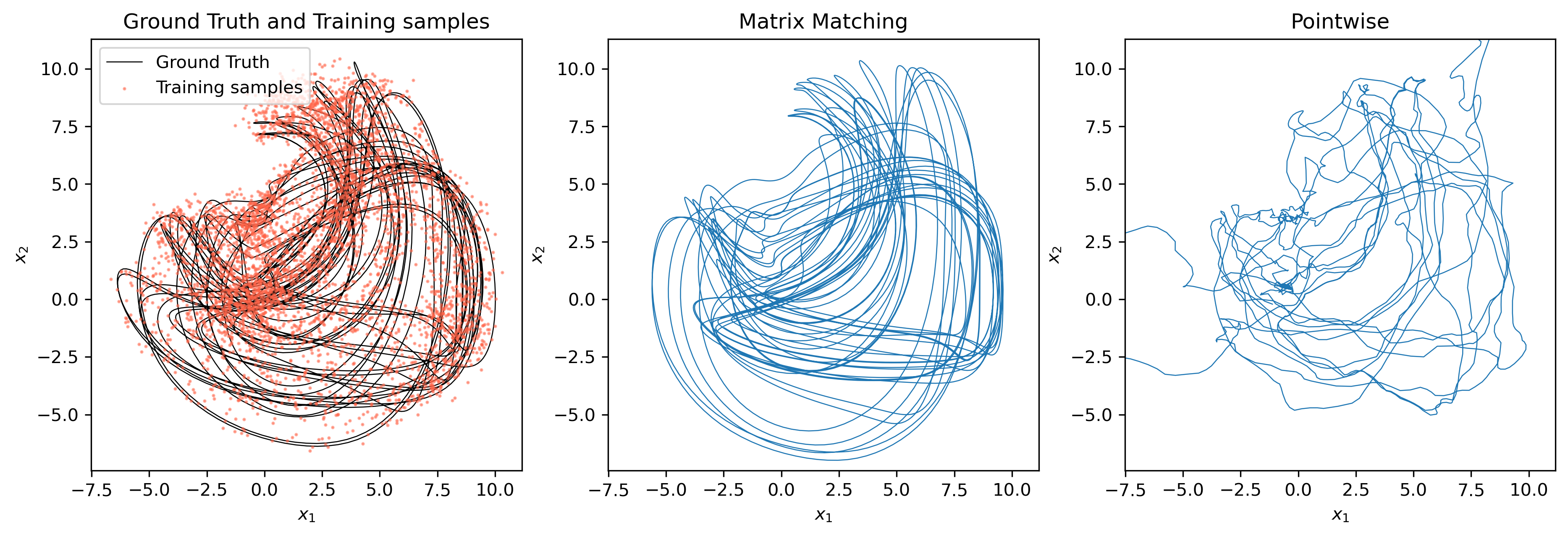}
    \caption{Two-dimensional projections of the ground truth trajectory and noisy training samples (left), together with trajectories reconstructed using our Markov matrix matching method and the pointwise method.}
    \label{fig:ComparisonL96MATrand}
\end{figure}

Finally, \Cref{tab:mat} compares the one-step RMSE and the $W_2$ reconstruction error for our Markov matrix matching approach and the pointwise method. Across both systems, matrix matching achieves smaller mean errors and smaller standard deviations. In several noisy experiments, the pointwise reconstruction becomes trapped near a spurious fixed point or periodic orbit, leading to large variance in the $W_2$ error.

\begin{table} 
    \centering
    \begin{tabular}{|c|c|c|c|c|}
        \hline
        System & RMSE (matrix) & RMSE (pointwise) & $W_2$ (matrix) & $W_2$ (pointwise)\\
        \hline
        Lorenz-63  & $0.51\pm 0.02$ & $0.60\pm 0.02$ & $5.38\pm 0.35$ & $19.65\pm 0.88$ \\
        Lorenz-96  & $0.09\pm 0.01$ & $0.34\pm 0.01$ & $2.78\pm 0.68$ & $6.53\pm 3.65$ \\
        \hline
    \end{tabular}
    \caption{RMSE one-step reconstruction errors and $W_2$ reconstruction errors for the Lorenz-63 and Lorenz-96 experiments shown in \Cref{fig:ComparisonL63MATrand,fig:ComparisonL96MATrand}. The error statistics are computed across 10 different randomized realizations of neural network training.}
    \label{tab:mat}
\end{table}

\subsection{Application to NOAA SST Dataset}\label{SST}

We next apply our methods to a real-world forecasting problem using the NOAA sea surface temperature (SST) dataset \cite{reynolds2002improved}. The SST snapshots are high-dimensional spatial fields, so we first reduce the dimension of the data before learning the dynamics. After constructing a proper orthogonal decomposition (POD) coordinate system of the SST fields, we train on the coefficients corresponding to the first $449$ monthly snapshots, and we use the following $50$ months to assess predictive ability. We compare the results of training using the Markov matrix matching \eqref{eq:background_M_loss} and invariant measure matching~\eqref{eq:background_IM_loss} objectives.

 Let $s_j \in \mathbb{R}^m$ denote the SST field at month $j$, where $m\approx 4.5\cdot 10^4$ is the number of spatial grid points. We apply a POD reduction to the snapshots and retain the first four dominant modes. Thus, each SST field is approximated by
\[
    s_j \approx \bar{s} + \Phi a_j,
\]
where $\bar{s}\in\mathbb{R}^m$ is the temporal mean, $\Phi \in \mathbb{R}^{m \times 4}$ contains the first four POD modes, and $a_j \in \mathbb{R}^4$ is the corresponding vector of POD coefficients. This representation converts the original high-dimensional SST forecasting problem into a four-dimensional time-series prediction problem. Before training the neural networks, we apply $z$-score normalization to the POD coefficients. Specifically, each coefficient is shifted and scaled using the mean and standard deviation computed from the data, so that the normalized coefficients have zero mean and unit variance. The POD reduction and normalization statistics are computed once from the full SST sequence, yielding a fixed coordinate system for training and testing models.

For this experiment we construct the weighting function following \eqref{eq:smooth_weights} with $\varepsilon = 2$ and we use $n = 30$ cells. For the Markov matrix matching objective, we minimize the Frobenius distance between transition matrices induced by the observed data and by the learned dynamics, while for the invariant measure matching objective, we minimize the $L^2$ distance between the simulated and observed invariant measures. Both methods use a fully connected neural network with two hidden layers of 100 nodes and hyperbolic tangent activation function to parameterize the velocity in the rescaled POD coefficient space. The flow map is approximated using five steps of a forward Euler scheme with $dt = 0.01$.  The models are trained until the loss reduces below $0.5\%$ of its initial value, which takes $3.5\cdot 10^3$ iterations for the matrix matching objective and $2.5\cdot 10^3$ iterations for the invariant measure matching objective.

After training, we initialize the model at the first POD coefficient in the testing dataset and simulate the learned four-dimensional dynamical system forward for $49$ months. This produces predicted POD coefficients $\hat{a}_j$ for the test period. We then transform the predictions $\hat{a}_j$ back to the original SST space using the POD reconstruction formula
\[
    \widehat{s}_j = \bar{s} + \Phi \widehat{a}_j.
\]
 We compare these reconstructed SST fields for both matrix and invariant measure matching with the true NOAA SST snapshots.

    \begin{figure}[h!]
    \centering
    \includegraphics[width=\textwidth]{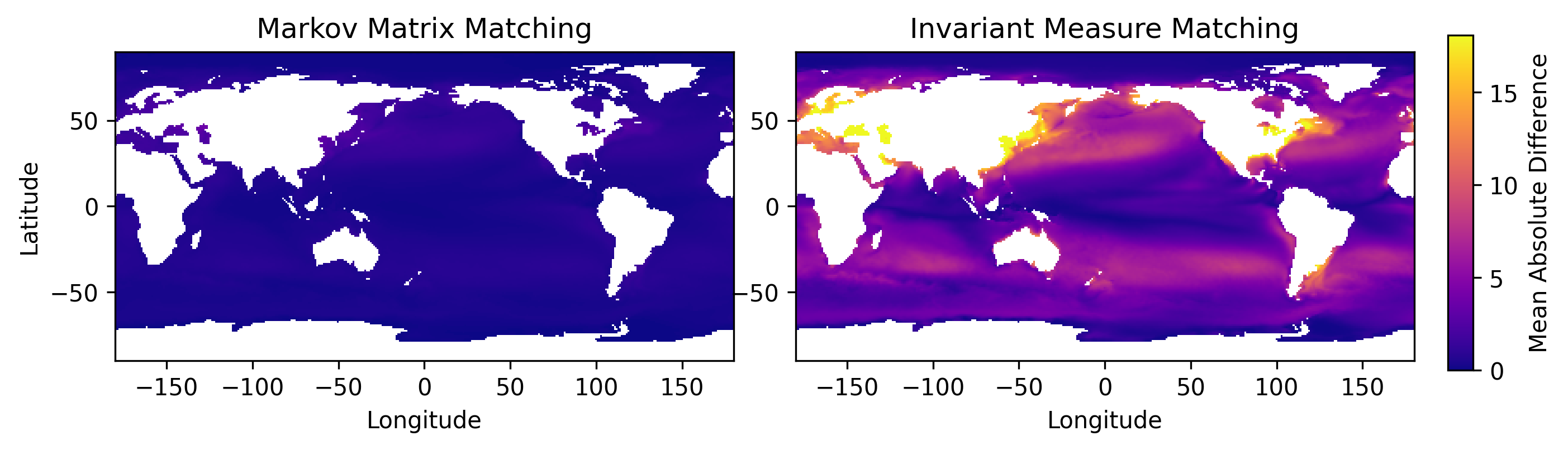}
    \caption{Spatial distributions of the time-averaged absolute error for the Markov matching SST reconstruction (left) and the reconstruction from the invariant measure matching method (right).}
    \label{fig:SST_MAE_1}
    \end{figure}
    \begin{figure} 
    \centering
    \includegraphics[width=\textwidth]{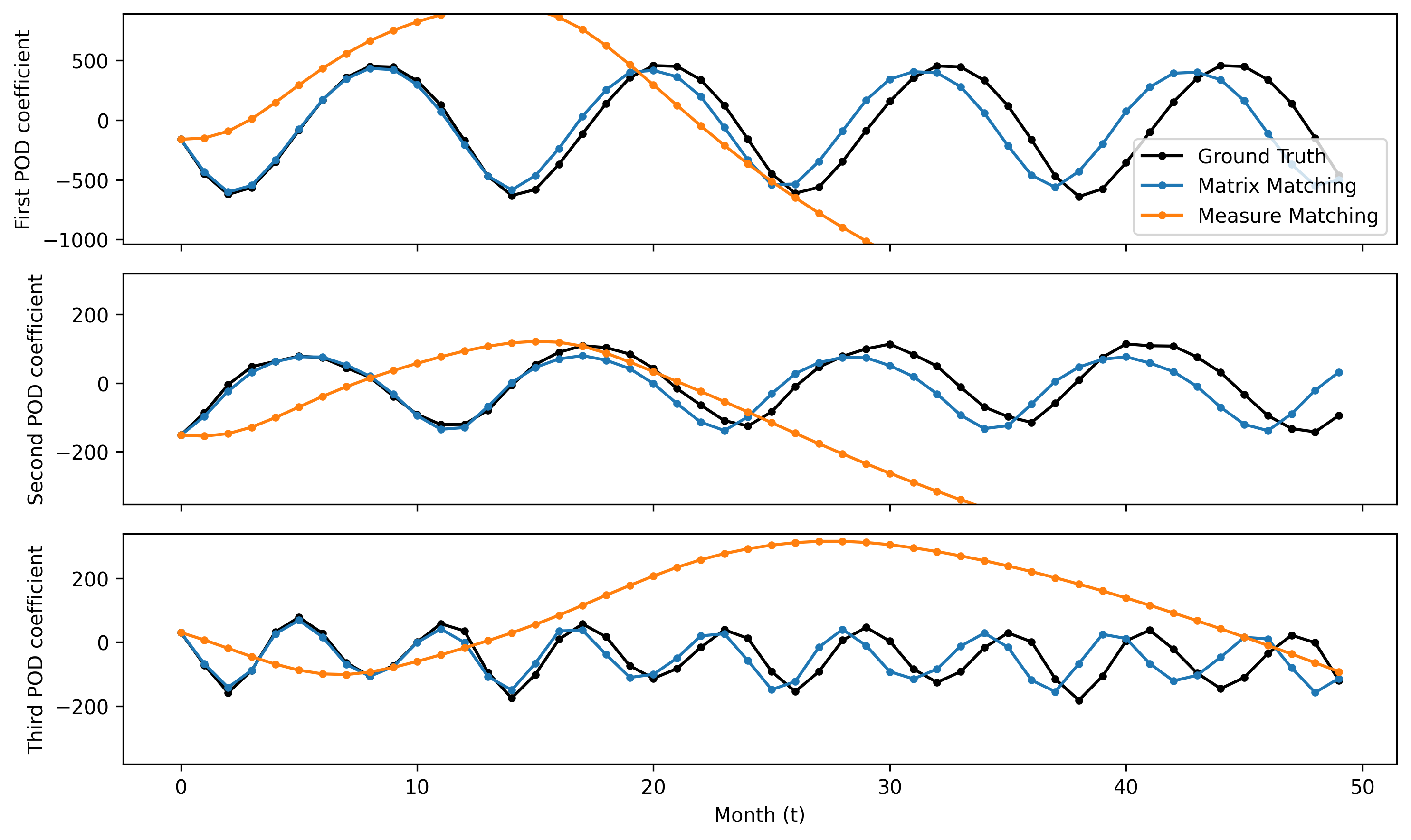}
    \caption{Comparison of the evolution of the first three components. The black curve is the ground truth. The blue and orange curves are the reconstructions using our matrix matching and invariant measure matching. }
    \label{fig:SST_MAE}
    \end{figure}
    
Figure~\ref{fig:SST_MAE_1} shows the spatial distribution of the time-averaged absolute reconstruction error over the $50$-month testing window. The left panel corresponds to our Markov matrix matching method, while the right panel corresponds to the invariant measure matching. These plots compare where each method accumulates error in the reconstructed SST field and provide a spatially resolved assessment of forecast accuracy. In Figure~\ref{fig:SST_MAE}, we compare the predicted time evolution of the first three POD coefficients. The black curve denotes the ground truth, while the blue and orange curves denote the reconstructions obtained using matrix matching and invariant measure matching, respectively. The matrix matching method produces a reliable forecast of the unseen testing dynamics, while the invariant measure matching  fails to accurately predict the POD coefficients. Across ten trials with randomized neural network initializations we find that the one-step RMSE error for prediction within POD coordinates is  $117.70 \pm 13.70$ for invariant measure matching and $25.97\pm 2.63$ for Markov matrix matching. Since the POD coefficients encode the dominant temporal variability of the SST field, accurate prediction of these components is essential for producing reliable full-field reconstructions.

These findings are not unexpected, as the Markov matrix matching objective encodes significantly more information compared to the invariant measure matching objective. When constraining the learned dynamics using only the invariant measure, one can only enforce that the model's long-term statistical behavior matches the data. In contrast, Markov matrix matching captures short-time transient behaviors described through transition probabilities between different regions of state space. In Section \ref{IM}, we restricted the parametric model class when using the invariant measure to infer the dynamics to ensure unique recovery. The poor performance of invariant measure matching in Figures~\ref{fig:SST_MAE_1} and~\ref{fig:SST_MAE} suggests that without prior knowledge of the functional form of the underlying dynamics one should use the Markov matrix matching objective for trajectory forecasting. In situations where the data are too sparse and noisy to reliably estimate finite-time transition probabilities, invariant measure matching is expected to provide additional benefits over Markov matrix matching.

\section{Conclusions}\label{sec:conclusion}
In this work, we introduced a transition-statistics framework for learning dynamical systems from trajectory data. Rather than enforcing pointwise agreement between observed and simulated trajectories, the method compares how probability mass moves through state space over a finite time interval. This Eulerian perspective is well-suited to noisy, sparsely sampled, and chaotic systems, where trajectory-level losses can be unstable or overly sensitive to small errors.

The approach is based on data-adaptive approximations of the Perron--Frobenius operator. We construct Voronoi partitions from observed trajectory data, concentrating resolution in regions visited by the dynamics. On this partition, we build a regularized Ulam-type transition matrix by replacing hard cell indicators with nonnegative partition-of-unity weights. This preserves the stochastic structure of the Markov matrix while making its entries differentiable with respect to the parameters of a learned vector field.

Within this framework, we considered two learning objectives: matching invariant measures and matching full Markov transition matrices. Invariant-measure matching compares the long-time statistical behavior of the observed and learned systems, while Markov matrix matching also captures finite-time transport between regions of state space. Our theoretical analysis establishes convergence of the regularized transition matrix in an appropriate limit and provides a finite-sample error analysis motivating data-adaptive partition construction.

Numerical experiments on the Lorenz-63 and Lorenz-96 systems show that transition-statistics objectives can produce more stable long-time reconstructions than pointwise trajectory matching, particularly under noise and coarse temporal sampling. Moreover, we compare Markov matrix matching and invariant measure matching on a reduced-order NOAA sea surface temperature forecasting problem, demonstrating that Markov matrix matching more accurately captures transient dynamics without prior knowledge of the functional form of the underlying velocity. Future work includes developing improved adaptive partitions for high-dimensional systems, scalable discrepancies between large transition matrices, and extensions to partially observed, stochastic, and irregularly sampled dynamics.

\section*{Acknowledgments}
J.~B.-G.~was supported in part by a fellowship award under contract FA9550-21-F-0003 through the National Defense Science and Engineering Graduate (NDSEG) Fellowship Program, sponsored by the Air Force Research Laboratory (AFRL), the Office of Naval Research (ONR) and the Army Research Office (ARO), and ONR under award N00014-24-1-2088. Y.~Y.~was supported in part by the National Science Foundation under award DMS-2409855 and by ONR under award N00014-24-1-2088.

This work was initiated and supported in part by the Cornell University Department of Mathematics through its Summer 2023 Research Experiences for Undergraduates (REU) program. The authors thank Robert Martin for helpful discussions and insights during the early stages of this project.

\end{document}